\newcommand{\email}[1]{\href{mailto:#1}{\nolinkurl{#1}}}
\renewcommand{\leq}{\ensuremath{\leqslant}}
\renewcommand{\geq}{\ensuremath{\geqslant}}
\newcommand{\minimize}[2]{\ensuremath{\underset{\substack{{#1}}}%
{\mathrm{minimize}}\;\;#2 }}
\newcommand{\Frac}[2]{\displaystyle{\frac{#1}{#2}}} 
\newcommand{\menge}[2]{\big\{{#1} \mid {#2}\big\}}
\newcommand{\emp}{\ensuremath{{\varnothing}}}
\newcommand{\scal}[2]{\langle{#1}\mid {#2} \rangle}
\newcommand{\exi}{\ensuremath{\exists\,}}
\newcommand{\zerounf}{\ensuremath{\left]0,1\right]}}   
\newcommand{\HH}{\ensuremath{\mathcal H}}
\newcommand{\GG}{\ensuremath{\mathcal G}}
\newcommand{\SL}{\ensuremath{\EuScript S}\,}
\newcommand{\BP}{\ensuremath{\EuScript P}}
\newcommand{\RR}{\ensuremath{\mathbb R}}
\newcommand{\RP}{\ensuremath{\left[0,+\infty\right[}}
\newcommand{\RPP}{\ensuremath{\,\left]0,+\infty\right[}}
\newcommand{\RX}{\ensuremath{\,\left]-\infty,+\infty\right]}}
\newcommand{\RXX}{\ensuremath{\left[-\infty,+\infty\right]}}
\newcommand{\NN}{\ensuremath{\mathbb N}}
\newcommand{\prox}{\ensuremath{\operatorname{prox}}}
\newcommand{\inte}{\ensuremath{\operatorname{int}}}
\newcommand{\RPX}{\ensuremath{{[0,\pinf]}}}
\newcommand{\argmin}{\ensuremath{\operatorname{argmin}}}
\newcommand{\gra}{\ensuremath{\operatorname{gra}}}
\newcommand{\conv}{\ensuremath{\operatorname{conv}}}
\newcommand{\cconv}{\ensuremath{\overline{\operatorname{conv}}\,}}
\newcommand{\Fix}{\ensuremath{\operatorname{Fix}}}
\newcommand{\Id}{\ensuremath{\operatorname{Id}}}
\newcommand{\weakly}{\ensuremath{\rightharpoonup}}
\newcommand{\pinf}{\ensuremath{+\infty}}
\newtheorem{theorem}{Theorem}[section]
\newtheorem{lemma}[theorem]{Lemma}
\newtheorem{corollary}[theorem]{Corollary}
\newtheorem{proposition}[theorem]{Proposition}
\newtheorem{definition}[theorem]{Definition}
\theoremstyle{plain}{\theorembodyfont{\rmfamily}
}
\theoremstyle{plain}{\theorembodyfont{\rmfamily}
}
\theoremstyle{plain}{\theorembodyfont{\rmfamily}
}
\theoremstyle{plain}{\theorembodyfont{\rmfamily}
\newtheorem{example}[theorem]{Example}}
\theoremstyle{plain}{\theorembodyfont{\rmfamily}
\newtheorem{problem}[theorem]{Problem}}
\theoremstyle{plain}{\theorembodyfont{\rmfamily}
\newtheorem{remark}[theorem]{Remark}}
\theoremstyle{plain}{\theorembodyfont{\rmfamily}
}
\numberwithin{equation}{section}
\begin{document}
\title{\sffamily Variable Metric Quasi-Fej\'er 
Monotonicity\thanks{Contact author: 
P. L. Combettes, \email{plc@math.jussieu.fr},
phone: +33 1 4427 6319, fax: +33 1 4427 7200.
The work of B$\grave{\text{\u{a}}}$ng C\^ong V\~u was 
partially supported by Grant 102.01-2012.15 of the
Vietnam National Foundation for Science and Technology Development
(NAFOSTED).}}
\author{Patrick L. Combettes and 
B$\grave{\text{\u{a}}}$ng C\^ong V\~u\\[4mm]
\small UPMC Universit\'e Paris 06\\
\small Laboratoire Jacques-Louis Lions -- UMR CNRS 7598\\
\small 75005 Paris, France\\
\small\email{plc@math.jussieu.fr}, \email{vu@ljll.math.upmc.fr} 
}
\bigskip

\date{~}
\maketitle

\begin{abstract}
The notion of quasi-Fej\'er monotonicity has proven to be an 
efficient tool to simplify and unify the convergence analysis 
of various algorithms arising in applied nonlinear analysis. 
In this paper, we extend this notion in the context of 
variable metric algorithms, whereby the underlying norm 
is allowed to vary at each iteration. Applications to 
convex optimization and inverse problems are demonstrated.
\end{abstract}

{\bfseries Keywords}: 
convex feasibility problem,
convex optimization,
Hilbert space,
inverse problems,
proximal Landweber method,
proximal point algorithm,
quasi-Fej\'er sequence,
variable metric.

{\bf Mathematics Subject Classifications (2010)} 
65J05, 90C25, 47H09. 

\section{Introduction} 

Let $C$ be a nonempty closed subset of the Euclidean space $\RR^N$ 
and let $y$ be a point in its complement. In 1922, Fej\'er 
\cite{Feje22} considered the problem of finding a point
$x\in\RR^N$ such that $(\forall z\in C)$ $\|x-z\|<\|y-z\|$.
Based on this work, the term Fej\'er-monotonicity was coined
in \cite{Motz54} in connection with sequences 
$(x_n)_{n\in\NN}$ in $\RR^N$ that satisfy
\begin{equation}
\label{e:fejer}
(\forall z\in C)(\forall n\in\NN)\quad\|x_{n+1}-z\|\leq\|x_n-z\|.
\end{equation}
This concept was later broadened to that of quasi-Fej\'er
monotonicity in \cite{Ermo68} by relaxing \eqref{e:fejer} to 
\begin{equation}
\label{e:qfejer}
(\forall z\in C)(\forall n\in\NN)\quad\|x_{n+1}-z\|^2
\leq\|x_n-z\|^2+\varepsilon_n,
\end{equation}
where $(\varepsilon_n)_{n\in\NN}$ is a summable sequence in $\RP$.
These notions have proven to be remarkably useful in 
simplifying and unifying the convergence analysis of a large 
collection of algorithms arising in hilbertian nonlinear analysis, 
see for instance \cite{Baus96,Livre1,Else01,Opti04,Ency09,Ere69b,%
Erem09,Poly87,Raik69,Scho91} and the references therein.
In recent years, there have been attempts to generalize standard
algorithms such as those discussed in the above references by
allowing the underlying metric to vary over the course of the
iterations, e.g., \cite{Bonn95,Burk99,Chen97,Varm12,Loti09,Pare08}.
In order to better understand the
convergence properties of such algorithms and lay the ground for 
further developments, we extend in the present paper the 
notion of quasi-Fej\'er monotonicity to the context of variable 
metric iterations in general Hilbert spaces and investigate its 
properties. 

Our notation and preliminary results are presented in
Section~\ref{sec:2}. The notion of variable metric quasi-Fej\'er
monotonicity is introduced in Section~\ref{sec:3}, where weak and
strong convergence results are also established. In
Section~\ref{sec:4}, we focus on the special case
when, as in \eqref{e:qfejer}, monotonicity is with respect to 
the squared norms.
Finally, we illustrate the potential of these tools in the analysis
of variable metric convex feasibility algorithms in
Section~\ref{sec:5} and in the design of algorithms for solving
inverse problems in Section~\ref{sec:6}.

\section{Notation and technical facts} 
\label{sec:2}

Throughout, $\HH$ is a real Hilbert space, $\scal{\cdot}{\cdot}$ is
its scalar product and $\|\cdot\|$ the associated norm. The symbols 
$\weakly$ and $\to$ denote respectively weak and strong convergence,
$\Id$ denotes the identity operator, and $B(z;\rho)$ denotes the
closed ball of center $z\in\HH$ and radius $\rho\in\RPP$ ; 
$\SL(\HH)$ is the space of self-adjoint bounded linear operators 
from $\HH$ to $\HH$. The Loewner partial ordering 
on $\SL(\HH)$ is defined by
\begin{equation}
\label{e:loewner}
(\forall L_1\in\SL(\HH))(\forall L_2\in\SL(\HH))\quad
L_1\succcurlyeq L_2\quad\Leftrightarrow\quad(\forall x\in\HH)\quad
\scal{L_1x}{x}\geq\scal{L_2x}{x}.
\end{equation}
Now let $\alpha\in\RP$, set
\begin{equation}
\BP_{\alpha}(\HH)=\menge{L\in\SL(\HH)}{L\succcurlyeq\alpha\Id},
\end{equation}
and fix $W\in\BP_{\alpha}(\HH)$. We define a semi-scalar product and
a semi-norm (a scalar product and a norm if $\alpha>0$) by
\begin{equation}
\label{e:guad2012p}
(\forall x\in\HH)(\forall y\in\HH)\quad\scal{x}{y}_W=\scal{Wx}{y}
\quad\text{and}\quad\|x\|_W=\sqrt{\scal{Wx}{x}}.
\end{equation}
Let $C$ be a nonempty subset of $\HH$, let 
$\alpha\in\RPP$, and let $W\in\BP_\alpha(\HH)$. The interior of $C$
is $\inte C$, the distance function of $C$ is $d_C$, and the 
convex envelope of $C$ is $\conv C$, with closure $\cconv C$. If 
$C$ is closed and convex, the projection operator onto $C$ 
relative to the metric induced by $W$ in \eqref{e:guad2012p} is
\begin{equation}
P_C^W\colon\HH\to C\colon 
x\mapsto\underset{y\in C}{\argmin}\:\|x-y\|_W.
\end{equation}
We write $P_C^{\Id}=P_C$.  Finally, $\ell_+^1(\NN)$ denotes 
the set of summable sequences in $\RP$.

\begin{lemma}
\label{l:kjMMXII}
Let $\alpha\in\RPP$, let $\mu\in\RPP$, and let $A$ and $B$ be 
operators in $\SL(\HH)$ such that 
$\mu\Id\succcurlyeq A\succcurlyeq B\succcurlyeq\alpha\Id$. Then
the following hold.
\begin{enumerate}
\item
\label{l:kjMMXII-i}
$\alpha^{-1}\Id\succcurlyeq B^{-1}\succcurlyeq A^{-1}\succcurlyeq
\mu^{-1}\Id$. 
\item
\label{l:kjMMXII-ii}
$(\forall x\in\HH)$ $\scal{A^{-1}x}{x}\geq\|A\|^{-1}\|x\|^2$.
\item
\label{l:kjMMXII-iii}
$\|A^{-1}\|\leq\alpha^{-1}$.
\end{enumerate}
\end{lemma}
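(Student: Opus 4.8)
The plan is to reduce all three assertions to a single principle: on the positive-definite cone, operator inversion reverses the Loewner order. First I would record that $B\succcurlyeq\alpha\Id$ with $\alpha>0$ forces $B$ to be bounded below: from $\scal{Bx}{x}\geq\alpha\|x\|^2$ and Cauchy--Schwarz one gets $\|Bx\|\geq\alpha\|x\|$, and a bounded-below self-adjoint operator is injective with closed, dense (hence full) range, so it is boundedly invertible with $B^{-1}\in\SL(\HH)$ and $B^{-1}\succcurlyeq 0$. The same applies to $A$, which satisfies $A\succcurlyeq B\succcurlyeq\alpha\Id$. Thus every inverse appearing in the statement is a well-defined positive operator in $\SL(\HH)$, and in particular $\|A^{-1}\|$ is finite.

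The crux is the following claim: if $S,T\in\SL(\HH)$ satisfy $T\succcurlyeq\beta\Id$ for some $\beta>0$ and $S\succcurlyeq T$, then $T^{-1}\succcurlyeq S^{-1}$. To prove it I would use the positive square root $T^{1/2}$, which is invertible since $\sigma(T)\subseteq[\beta,\infty)$, and conjugate: set $R=T^{-1/2}ST^{-1/2}$. Because conjugation $M\mapsto N^*MN$ preserves $\succcurlyeq$, the hypothesis $S\succcurlyeq T$ gives $R\succcurlyeq T^{-1/2}TT^{-1/2}=\Id$. For a single self-adjoint operator $R\succcurlyeq\Id$, the continuous functional calculus yields $\sigma(R)\subseteq[1,\infty)$, hence $\sigma(R^{-1})\subseteq(0,1]$ and $R^{-1}\preccurlyeq\Id$; since $R^{-1}=T^{1/2}S^{-1}T^{1/2}$, conjugating back by $T^{-1/2}$ gives $S^{-1}\preccurlyeq T^{-1}$, which is the claim.

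With this principle the three parts follow quickly. Applying it to the comparisons $\mu\Id\succcurlyeq A$, $A\succcurlyeq B$, and $B\succcurlyeq\alpha\Id$ (in each case the smaller operator is $\succcurlyeq\alpha\Id>0$) produces $A^{-1}\succcurlyeq\mu^{-1}\Id$, $B^{-1}\succcurlyeq A^{-1}$, and $\alpha^{-1}\Id\succcurlyeq B^{-1}$, which chain together to give (i). For (iii), part (i) yields $A^{-1}\preccurlyeq\alpha^{-1}\Id$, so $\scal{A^{-1}x}{x}\leq\alpha^{-1}\|x\|^2$ for all $x\in\HH$; since $A^{-1}$ is positive and self-adjoint, $\|A^{-1}\|=\sup_{\|x\|\leq 1}\scal{A^{-1}x}{x}\leq\alpha^{-1}$. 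For (ii), positivity of $A$ gives the standard bound $A\preccurlyeq\|A\|\Id$, and the inversion principle (now with $\|A\|\Id$ as the larger operator and $A\succcurlyeq\alpha\Id$ the smaller) yields $A^{-1}\succcurlyeq\|A\|^{-1}\Id$, that is, $\scal{A^{-1}x}{x}\geq\|A\|^{-1}\|x\|^2$.

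I expect the only genuine obstacle to be the inversion principle, and within it the single-operator step $R\succcurlyeq\Id\Rightarrow R^{-1}\preccurlyeq\Id$. In finite dimensions this is transparent, but in a general Hilbert space it rests squarely on the spectral theorem and continuous functional calculus for bounded self-adjoint operators, both to construct $T^{1/2}$ and to read off the spectrum of $R^{-1}$. The remaining care lies in justifying invertibility and the square root rather than in any computation, so I would make sure the boundedness-below argument is in place before invoking the functional calculus.
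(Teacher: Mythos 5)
Your proof is correct, but for the key step --- the order reversal under inversion --- you take a genuinely different route from the paper. You prove the principle ``$S\succcurlyeq T\succcurlyeq\beta\Id$ with $\beta>0$ implies $T^{-1}\succcurlyeq S^{-1}$'' by conjugating with $T^{-1/2}$ and reading off the spectrum of $R=T^{-1/2}ST^{-1/2}\succcurlyeq\Id$ via the continuous functional calculus; this is the classical operator-theoretic argument, and it is the one the paper explicitly sets aside (the authors cite Kato for ``these facts'' and then announce ``a simple convex-analytic proof''). The paper instead sets $f=\scal{A\cdot}{\cdot}/2$ and $g=\scal{B\cdot}{\cdot}/2$, invokes the formula $f^*=\scal{A^{-1}\cdot}{\cdot}/2$ for the Fenchel conjugate of a positive-definite quadratic form, and uses the order reversal $f\geq g\Rightarrow g^*\geq f^*$; no square roots or spectral theory are needed, only the completion-of-a-square computation behind the conjugate formula. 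Your approach buys self-containedness modulo standard spectral theory and makes the mechanism (spectrum of $R$ in $[1,\infty)$) very explicit; the paper's buys brevity and stays inside the convex-analytic toolkit the rest of the article relies on. Parts (ii) and (iii) you handle essentially as the paper does --- (ii) by applying the inversion principle to $\|A\|\Id\succcurlyeq A\succcurlyeq\alpha\Id$, and (iii) by bounding $\scal{A^{-1}x}{x}\leq\alpha^{-1}\|x\|^2$ and taking the supremum over the unit ball, where your appeal to $\|A^{-1}\|=\sup_{\|x\|\leq 1}\scal{A^{-1}x}{x}$ for a positive self-adjoint operator is the same standard fact the paper uses implicitly. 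Your preliminary paragraph justifying that $A$ and $B$ are boundedly invertible is a welcome addition that the paper leaves tacit.
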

\begin{proof}
These facts are known \cite[Section~VI.2.6]{Kato80}.
We provide a simple convex-analytic proof.

\ref{l:kjMMXII-i}:
It suffices to show that $B^{-1}\succcurlyeq A^{-1}$. 
Set $(\forall x\in\HH)$ $f(x)=\scal{Ax}{x}/2$ and
$g(x)=\scal{Bx}{x}/2$. The conjugate of $f$ is
$f^*\colon\HH\to\RXX\colon u\mapsto\sup_{x\in\HH}
\big(\scal{x}{u}-f(x)\big)=\scal{A^{-1}u}{u}/2$
\cite[Proposition~17.28]{Livre1}. Likewise,
$g^*\colon\HH\to\RXX\colon u\mapsto\scal{B^{-1}u}{u}/2$.
Since, $f\geq g$, we have $g^*\geq f^*$, hence the result.

\ref{l:kjMMXII-ii}:
Since $\|A\|\Id\succcurlyeq A$, \ref{l:kjMMXII-i} yields
$A^{-1}\succcurlyeq\|A\|^{-1}\Id$.

\ref{l:kjMMXII-iii}: We have $A^{-1}\in\SL(\HH)$ and, by 
\ref{l:kjMMXII-i}, $(\forall x\in\HH)$ 
$\|x\|^2/\alpha\geq\scal{A^{-1}x}{x}$. Hence, upon taking the 
supremum over $B(0;1)$, we obtain $1/\alpha\geq\|A^{-1}\|$. 
\end{proof}

\begin{lemma} {\rm\cite[Lemma~2.2.2]{Poly87}}
\label{l:7}
Let $(\alpha_n)_{n\in\NN}$ be a sequence in $\RP$, 
let $(\eta_n)_{n\in\NN}\in\ell_+^1(\NN)$, and
let $(\varepsilon_n)_{n\in\NN}\in\ell_+^1(\NN)$ be such that 
$(\forall n\in\NN)$ 
$\alpha_{n+1}\leq(1+\eta_n)\alpha_n+\varepsilon_n$.
Then $(\alpha_n)_{n\in\NN}$ converges.
\end{lemma}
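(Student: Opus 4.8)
The plan is to neutralize the multiplicative perturbation $\eta_n$ by rescaling, thereby reducing the hypothesis to a purely additive inequality whose convergence is classical.

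First, since $(\eta_n)_{n\in\NN}\in\ell_+^1(\NN)$, I would show that the partial products $\pi_n=\prod_{k=0}^{n-1}(1+\eta_k)$, with the convention $\pi_0=1$, form a nondecreasing sequence that is bounded above: indeed $\log\pi_n=\sum_{k=0}^{n-1}\log(1+\eta_k)\leq\sum_{k=0}^{n-1}\eta_k\leq\sum_{k\in\NN}\eta_k<\pinf$. Hence $\pi_n\uparrow\pi_\infty$ for some $\pi_\infty\in[1,\pinf[$, and in particular $\pi_n\geq 1$ for every $n\in\NN$.

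Next I would set $\theta_n=\alpha_n/\pi_n$ and divide the hypothesis $\alpha_{n+1}\leq(1+\eta_n)\alpha_n+\varepsilon_n$ by $\pi_{n+1}=(1+\eta_n)\pi_n$. Using $\pi_{n+1}\geq 1$, this yields $\theta_{n+1}\leq\theta_n+\varepsilon_n/\pi_{n+1}\leq\theta_n+\varepsilon_n$, so the rescaled sequence $(\theta_n)_{n\in\NN}$ in $\RP$ obeys an additive recursion with summable increments. To upgrade this to convergence, I would introduce the tail sums $\sigma_n=\sum_{k\geq n}\varepsilon_k$, which are well defined and satisfy $\sigma_n\to 0$ because $(\varepsilon_n)_{n\in\NN}\in\ell_+^1(\NN)$, together with $\sigma_{n+1}=\sigma_n-\varepsilon_n$. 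The sequence $(\theta_n+\sigma_n)_{n\in\NN}$ is then nonincreasing, since $\theta_{n+1}+\sigma_{n+1}\leq\theta_n+\varepsilon_n+(\sigma_n-\varepsilon_n)=\theta_n+\sigma_n$, and it is bounded below by $0$. Being monotone and bounded, it converges; since $\sigma_n\to 0$, the sequence $(\theta_n)_{n\in\NN}$ converges. Finally $\alpha_n=\pi_n\theta_n\to\pi_\infty\lim_n\theta_n$, a finite limit, which completes the argument.

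The only step requiring genuine care is the first one, where one must confirm that the rescaling factors $\pi_n$ neither blow up nor collapse to $0$. Summability of $(\eta_n)_{n\in\NN}$ delivers both $\pi_\infty<\pinf$ and $\pi_n\geq 1$, which is precisely what legitimizes the division in the second step and renders the final multiplication by $\pi_n$ harmless; everything thereafter is a routine telescoping argument.
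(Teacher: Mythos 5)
Your proof is correct: the rescaling by $\pi_n=\prod_{k=0}^{n-1}(1+\eta_k)$, the telescoping with the tail sums $\sigma_n=\sum_{k\geq n}\varepsilon_k$, and the final multiplication by $\pi_n\to\pi_\infty<\pinf$ are all sound, and every inequality you use ($\pi_n\geq 1$, $\theta_n\geq 0$, $\sigma_n\geq 0$) is justified. The paper itself gives no proof of this lemma---it simply cites Polyak---and your argument is precisely the standard one for that result, so there is nothing to compare beyond noting that you have supplied a complete, self-contained derivation of the cited fact.
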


The following lemma extends the classical property that 
a uniformly bounded monotone sequence of operators in 
$\SL(\HH)$ converges pointwise \cite[Th\'eor\`eme~104.1]{Ries68}.

\begin{lemma}
\label{l:ES175}
Let $\alpha\in\RPP$, let $(\eta_n)_{n\in\NN}\in\ell_+^1(\NN)$, 
and let $(W_n)_{n\in\NN}$ be a sequence in $\BP_{\alpha}(\HH)$ 
such that $\mu=\sup_{n\in\NN}\|W_n\|<\pinf$. Suppose that one 
of the following holds.
\begin{enumerate}
\item
\label{l:ES175i}
$(\forall n\in\NN)$ $(1+\eta_n)W_n\succcurlyeq W_{n+1}$.
\item
\label{l:ES175ii}
$(\forall n\in\NN)$ $(1+\eta_n)W_{n+1}\succcurlyeq W_n$.
\end{enumerate}
Then there exists $W\in\BP_{\alpha}(\HH)$ such that 
$W_n\to W$ pointwise.
\end{lemma}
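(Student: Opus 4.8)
The plan is to convert the multiplicative quasi-monotonicity hypotheses into genuine Loewner monotonicity by rescaling, and then to invoke the classical pointwise convergence theorem for bounded monotone sequences of self-adjoint operators cited just before the lemma. Since $(\eta_n)_{n\in\NN}\in\ell_+^1(\NN)$, the partial products
\begin{equation}
\theta_n=\prod_{k=0}^{n-1}(1+\eta_k)\quad(\text{with }\theta_0=1)
\end{equation}
form a nondecreasing sequence in $\left[1,\pinf\right[$ that converges to a finite limit $\theta\geq 1$, because $\ln\theta_n=\sum_{k=0}^{n-1}\ln(1+\eta_k)\leq\sum_{k\in\NN}\eta_k<\pinf$. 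The useful feature of $(\theta_n)_{n\in\NN}$ is that $\theta_{n+1}=(1+\eta_n)\theta_n$, so dividing or multiplying $W_n$ by $\theta_n$ exactly absorbs the factors $(1+\eta_n)$.

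In case \ref{l:ES175i}, I would set $V_n=\theta_n^{-1}W_n$. Then \ref{l:ES175i} gives $V_{n+1}=\theta_{n+1}^{-1}W_{n+1}\preccurlyeq\theta_{n+1}^{-1}(1+\eta_n)W_n=\theta_n^{-1}W_n=V_n$, so $(V_n)_{n\in\NN}$ is Loewner nonincreasing; moreover $0\preccurlyeq V_n$ and, since $\theta_n\geq 1$, we have $\|V_n\|=\theta_n^{-1}\|W_n\|\leq\mu$, so $(V_n)_{n\in\NN}$ is bounded below and uniformly bounded. In case \ref{l:ES175ii}, I would instead set $V_n=\theta_n W_n$, and the symmetric computation using \ref{l:ES175ii} yields $V_n\preccurlyeq V_{n+1}$, i.e. $(V_n)_{n\in\NN}$ is Loewner nondecreasing, with $\alpha\Id\preccurlyeq V_n\preccurlyeq\theta\mu\Id$, hence again uniformly bounded.

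In both cases $(V_n)_{n\in\NN}$ is a uniformly bounded monotone sequence in $\SL(\HH)$, so the classical result of \cite{Ries68} provides $V\in\SL(\HH)$ with $V_nx\to Vx$ for every $x\in\HH$. Since $\theta_n\to\theta>0$, it follows that $W_nx=\theta_nV_nx\to\theta Vx$ in case \ref{l:ES175i} and $W_nx=\theta_n^{-1}V_nx\to\theta^{-1}Vx$ in case \ref{l:ES175ii}; in either case I define $W$ to be the resulting pointwise limit, which is a bounded self-adjoint operator. It remains to verify that $W\in\BP_\alpha(\HH)$: for every $x\in\HH$ we have $\scal{W_nx}{x}\geq\alpha\|x\|^2$, and passing to the limit (inner products being continuous) gives $\scal{Wx}{x}\geq\alpha\|x\|^2$, that is, $W\succcurlyeq\alpha\Id$.

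The only real obstacle is identifying the correct normalization, and this is precisely where summability of $(\eta_n)_{n\in\NN}$ enters: it guarantees $\theta\in\left[1,\pinf\right[$, so that both the reduction to a monotone sequence and the scaling back to $W$ are legitimate (a non-summable $(\eta_n)_{n\in\NN}$ could send $\theta_n$ to $\pinf$ and destroy the uniform bounds needed to apply the classical theorem). Everything else is routine verification of the two Loewner inequalities and of the closedness of the cone $\BP_\alpha(\HH)$ under pointwise limits.
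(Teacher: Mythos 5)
Your proof is correct, but it takes a genuinely different route from the paper's. You normalize by the partial products $\theta_n=\prod_{k=0}^{n-1}(1+\eta_k)$ so as to turn each quasi-monotonicity hypothesis into exact Loewner monotonicity ($V_n=\theta_n^{-1}W_n$ nonincreasing and bounded below in case \ref{l:ES175i}, $V_n=\theta_n W_n$ nondecreasing and bounded above in case \ref{l:ES175ii}), and then you invoke the classical theorem of Riesz--Sz.-Nagy on bounded monotone operator sequences as a black box; since $\theta_n\to\theta<\pinf$, undoing the normalization and passing to the limit in $\scal{W_nx}{x}\geq\alpha\|x\|^2$ finishes the argument. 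The paper uses the same normalization in case \ref{l:ES175i} but does \emph{not} cite the classical theorem: it reproves it in the rescaled setting by a self-contained Cauchy-sequence argument, introducing the positive operators $W_{n,m}=\tau_n^{-1}W_n-\tau_{n+m}^{-1}W_{n+m}$ and applying the Cauchy--Schwarz inequality for the semi-norms $\|\cdot\|_{W_{n,m}}$ to get $\|W_{n,m}x\|^4\leq\|x\|_{W_{n,m}}^2\mu^3\|x\|^2\to 0$. For case \ref{l:ES175ii}, the paper passes to the inverses $L_n=W_n^{-1}$, which satisfy the hypothesis of case \ref{l:ES175i} by Lemma~\ref{l:kjMMXII}, and then converts the pointwise limit back; your symmetric rescaling $V_n=\theta_nW_n$ handles this case directly and avoids the inverse machinery altogether. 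What your approach buys is brevity and symmetry between the two cases, at the price of leaning on the cited classical result; what the paper's approach buys is a fully self-contained proof (its case-\ref{l:ES175i} argument is in effect a proof of that classical theorem). Both are valid.
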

\begin{proof}
\ref{l:ES175i}: 
Set $\tau=\prod_{n\in\NN}(1+\eta_n)$, $\tau_0=1$, and, for every 
$n\in\NN\smallsetminus\{0\}$, $\tau_n=\prod_{k=0}^{n-1}(1+\eta_k)$.
Then $\tau_n\to\tau<\pinf$ \cite[Theorem~3.7.3]{Knop56} and
\begin{equation}
\label{e:refaire1}
(\forall n\in\NN)\quad
\mu\Id\succcurlyeq W_n\succcurlyeq\alpha\Id\quad\text{and}\quad
\tau_{n+1}=\tau_n(1+\eta_n).
\end{equation}
Now define
\begin{equation}
\label{e:fema}
(\forall n\in\NN)(\forall m\in\NN)\quad
W_{n,m}=\frac{1}{\tau_n}W_n-\frac{1}{\tau_{n+m}}W_{n+m}.
\end{equation}
Then we derive from \eqref{e:refaire1} that 
\begin{align}
\label{e:KJrapture}
(\forall n\in\NN)(\forall m\in\NN\smallsetminus\{0\})
(\forall x\in\HH)\quad 0
&=\frac{1}{\tau_n}\scal{W_nx}{x}-\frac{1}{\tau_{n+m}}
\prod_{k=n}^{n+m-1}(1+\eta_k)\scal{W_nx}{x}\nonumber\\
&\leq\frac{1}{\tau_n}\scal{W_nx}{x}-\frac{1}{\tau_{n+m}}
\scal{W_{n+m}x}{x}\nonumber\\
&=\scal{W_{n,m}x}{x}\nonumber\\
&\leq\frac{1}{\tau_n}\scal{W_nx}{x}\nonumber\\
&\leq\scal{W_nx}{x}\nonumber\\
&\leq\mu\|x\|^2.
\end{align}
Therefore
\begin{equation}
\label{e:poleshift}
(\forall n\in\NN)(\forall m\in\NN)\quad
W_{n,m}\in\BP_{0}(\HH)\quad\text{and}\quad
\|W_{n,m}\|\leq\mu.
\end{equation}
Let us fix $x\in\HH$. By assumption, $(\forall n\in\NN)$ 
$\|x\|_{W_{n+1}}^2\leq(1+\eta_n)\|x\|_{W_n}^2$.
Hence, by Lemma~\ref{l:7}, $(\|x\|_{W_n}^2)_{n\in\NN}$
converges. In turn, $(\tau_n^{-1}\|x\|_{W_n}^2)_{n\in\NN}$
converges, which implies that
\begin{equation}
\label{e:refaire2}
\|x\|^2_{W_{n,m}}=\scal{W_{n,m}x}{x}=\frac{1}{\tau_n}
\|x\|_{W_n}^2-\frac{1}{\tau_{n+m}}\|x\|_{W_{n+m}}^2
\to 0\quad\text{as}\quad n,m\to\pinf.
\end{equation}
Therefore, using \eqref{e:poleshift}, Cauchy-Schwarz for the 
semi-norms $(\|\cdot\|_{W_{n,m}})_{(n,m)\in\NN^2}$, and 
\eqref{e:refaire2}, we obtain
\begin{align}
\|W_{n,m}x\|^4
&=\scal{x}{W_{n,m}x}_{W_{n,m}}^2\nonumber\\
&\leq\|x\|^2_{W_{n,m}}\,\|W_{n,m}x\|^2_{W_{n,m}}\nonumber\\
&\leq\|x\|^2_{W_{n,m}}\,\mu^3\|x\|^2\nonumber\\
&\to 0\quad\text{as}\quad n,m\to\pinf.
\end{align}
Thus, we derive from \eqref{e:fema} that 
$(\tau_n^{-1}W_n x)_{n\in\NN}$ is a Cauchy sequence.
Hence, it converges strongly, and so does $(W_nx)_{n\in\NN}$. 
If we call $Wx$ the limit of $(W_nx)_{n\in\NN}$, the above
construction yields the desired operator $W\in\BP_{\alpha}(\HH)$.

\ref{l:ES175ii}: 
Set $(\forall n\in\NN)$ $L_n=W_n^{-1}$. It follows from 
Lemma~\ref{l:kjMMXII}\ref{l:kjMMXII-i}\&\ref{l:kjMMXII-iii} that 
$(L_n)_{n\in\NN}$ lies in 
$\BP_{1/\mu}(\HH)$, $\sup_{n\in\NN}\|L_n\|\leq 1/\alpha$, 
and $(\forall n\in\NN)$ $(1+\eta_n)L_n\succcurlyeq L_{n+1}$. 
Hence, appealing to \ref{l:ES175i}, there exists 
$L\in\BP_{1/\mu}(\HH)$ such that $\|L\|\leq 1/\alpha$ and 
$L_n\to L$ pointwise. Now let $x\in\HH$, and set 
$W=L^{-1}$ and $(\forall n\in\NN)$ $x_n=L_n(Wx)$. 
Then $W\in\BP_{\alpha}(\HH)$ and $x_n\to L(Wx)=x$. Moreover,
$\|W_nx-Wx\|=\|W_n(x-x_n)\|\leq\mu\|x_n-x\|\to 0$.
\end{proof}

\section{Variable metric quasi-Fej\'er monotone sequences}
\label{sec:3}

Our paper hinges on the following extension of \eqref{e:qfejer}.

\begin{definition}
\label{d:vmqf}
Let $\alpha\in\RPP$, let $\phi\colon\RP\to\RP$, let 
$(W_n)_{n\in\NN}$ be a sequence in $\BP_{\alpha}(\HH)$, let $C$ 
be a nonempty subset of $\HH$, and let $(x_n)_{n\in\NN}$ be a 
sequence in $\HH$. Then $(x_n)_{n\in\NN}$ is:
\begin{enumerate}
\item
$\phi$-quasi-Fej\'er monotone with respect to the target set $C$ 
relative to $(W_n)_{n\in\NN}$ if
\begin{multline}
\label{e:vmqf1}
\big(\exi(\eta_n)_{n\in\NN}\in\ell_+^1(\NN)\big)
\big(\forall z\in C\big)\big(\exi(\varepsilon_n)_{n\in\NN}\in
\ell_+^1(\NN)\big)(\forall n\in\NN)\\
\phi(\|x_{n+1}-z\|_{W_{n+1}}) 
\leq (1+\eta_n)\phi(\|x_n-z\|_{W_n})+\varepsilon_n;
\end{multline}
\item
stationarily $\phi$-quasi-Fej\'er monotone with respect to the
target set $C$ relative to $(W_n)_{n\in\NN}$ if
\begin{multline}
\label{e:vmqf2}
\big(\exi(\varepsilon_n)_{n\in\NN}\in\ell_+^1(\NN)\big)
\big(\exi(\eta_n)_{n\in\NN}\in\ell_+^1(\NN)\big)(\forall z\in C)
(\forall n\in\NN)\\
\phi(\|x_{n+1}-z\|_{W_{n+1}}) 
\leq (1+\eta_n)\phi(\|x_n-z\|_{W_n})+\varepsilon_n.
\end{multline}
\end{enumerate}
\end{definition}

We start with basic properties.

\begin{proposition}
\label{p:1} 
Let $\alpha\in\RPP$, let $\phi\colon\RP\to\RP$ be strictly 
increasing and such that $\lim_{t\to\pinf}\phi(t)=\pinf$, 
let $(W_n)_{n\in\NN}$ be in $\BP_{\alpha}(\HH)$, let $C$ be 
a nonempty subset of $\HH$, and let $(x_n)_{n\in\NN}$ be a 
sequence in $\HH$ such that \eqref{e:vmqf1} is satisfied. 
Then the following hold.
\begin{enumerate}
\item 
\label{p:1i}
Let $z\in C$. Then $(\|x_n-z\|_{W_n})_{n\in\NN}$ converges.
\item
\label{p:1ii} 
$(x_n)_{n\in\NN}$ is bounded.
\end{enumerate}
\end{proposition}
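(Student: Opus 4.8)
The plan is to establish \ref{p:1i} first and then obtain \ref{p:1ii} as an immediate consequence. Fix $z\in C$. By \eqref{e:vmqf1}, there exist $(\eta_n)_{n\in\NN}\in\ell_+^1(\NN)$ and $(\varepsilon_n)_{n\in\NN}\in\ell_+^1(\NN)$ such that, setting $\alpha_n=\phi(\|x_n-z\|_{W_n})$ for every $n\in\NN$, we have $\alpha_{n+1}\leq(1+\eta_n)\alpha_n+\varepsilon_n$. This is exactly the hypothesis of Lemma~\ref{l:7}, so $(\alpha_n)_{n\in\NN}$ converges, say to some $\ell\in\RP$.

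It remains to transfer this convergence to $t_n=\|x_n-z\|_{W_n}$ itself, and this is the delicate point: $\phi$ is only assumed strictly increasing, hence possibly discontinuous, so $\phi^{-1}$ need not be continuous and I cannot simply pass to the limit through $\phi$. I would argue in two steps. First, $(t_n)_{n\in\NN}$ is bounded: otherwise some subsequence would tend to $\pinf$, and the coercivity assumption $\lim_{t\to\pinf}\phi(t)=\pinf$ would force the corresponding subsequence of $(\alpha_n)_{n\in\NN}$ to diverge, contradicting its convergence. Second, $(t_n)_{n\in\NN}$ does not oscillate: suppose $\liminf_n t_n<\limsup_n t_n$ and pick $c,d$ with $\liminf_n t_n<c<d<\limsup_n t_n$. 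Then $t_n<c$ for infinitely many $n$ and $t_n>d$ for infinitely many $n$; since $\phi$ is strictly increasing, the former yields $\alpha_n<\phi(c)$ and the latter $\alpha_n>\phi(d)$ along the respective index sets. Passing to the limit $\alpha_n\to\ell$ along each gives $\phi(d)\leq\ell\leq\phi(c)$, which is incompatible with the inequality $\phi(c)<\phi(d)$. Hence $\liminf_n t_n=\limsup_n t_n$, i.e. $(\|x_n-z\|_{W_n})_{n\in\NN}$ converges, which proves \ref{p:1i}.

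For \ref{p:1ii}, fix any $z\in C$ (possible since $C\neq\emp$). By \ref{p:1i}, $(\|x_n-z\|_{W_n})_{n\in\NN}$ converges and is therefore bounded, say by $\beta\in\RP$. Since $W_n\in\BP_\alpha(\HH)$ we have $W_n\succcurlyeq\alpha\Id$, whence $\alpha\|x_n-z\|^2\leq\scal{W_n(x_n-z)}{x_n-z}=\|x_n-z\|_{W_n}^2\leq\beta^2$ for every $n\in\NN$. Thus $\|x_n-z\|\leq\beta/\sqrt{\alpha}$, so $(x_n)_{n\in\NN}$ is bounded.

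The only genuine obstacle is the middle step, namely upgrading convergence of $(\phi(t_n))_{n\in\NN}$ to convergence of $(t_n)_{n\in\NN}$ when $\phi$ may be discontinuous. The $\liminf$/$\limsup$ sandwiching through strict monotonicity circumvents any appeal to continuity of $\phi$ or $\phi^{-1}$, while coercivity is used only to secure the preliminary boundedness of $(t_n)_{n\in\NN}$; the uniform lower bound $W_n\succcurlyeq\alpha\Id$ then makes \ref{p:1ii} routine.
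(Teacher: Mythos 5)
Your proof is correct and follows essentially the same route as the paper: apply Lemma~\ref{l:7} to $(\phi(\|x_n-z\|_{W_n}))_{n\in\NN}$, use coercivity of $\phi$ for boundedness, rule out two distinct cluster points of $(\|x_n-z\|_{W_n})_{n\in\NN}$ via strict monotonicity of $\phi$ (your $\liminf<c<d<\limsup$ sandwich is just a cosmetic variant of the paper's two-subsequence argument with $\eta+\varepsilon<\zeta-\varepsilon$), and then deduce \ref{p:1ii} from $W_n\succcurlyeq\alpha\Id$. No gaps.
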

\begin{proof}
\ref{p:1i}: Set $(\forall n\in\NN)$ $\xi_n=\|x_n-z\|_{W_n}$.
It follows from \eqref{e:vmqf1} and Lemma~\ref{l:7} that 
$(\phi(\xi_n))_{n\in\NN}$ converges, say 
$\phi(\xi_n)\to\lambda$. In turn, since 
$\lim_{t\to\pinf}\phi(t)=\pinf$, $(\xi_n)_{n\in\NN}$ is bounded 
and, to show that it converges, it suffices to show that it cannot
have two distinct cluster points. Suppose to the contrary that
we can extract two subsequences $(\xi_{k_n})_{n\in\NN}$ and 
$(\xi_{l_n})_{n\in\NN}$ such that $\xi_{k_n}\to\eta$ and 
$\xi_{l_n}\to\zeta>\eta$, and fix 
$\varepsilon\in\left]0,(\zeta-\eta)/2\right[$. Then, for $n$
sufficiently large, $\xi_{k_n}\leq\eta+\varepsilon<
\zeta-\varepsilon\leq\xi_{l_n}$ and, since $\phi$ is strictly
increasing, $\phi(\xi_{k_n})\leq\phi(\eta+\varepsilon)<
\phi(\zeta-\varepsilon)\leq\phi(\xi_{l_n})$. Taking the limit as
$n\to\pinf$ yields $\lambda\leq\phi(\eta+\varepsilon)<
\phi(\zeta-\varepsilon)\leq\lambda$, which is impossible.

\ref{p:1ii}: Let $z\in C$. Since $(W_n)_{n\in\NN}$ lies in 
$\BP_{\alpha}(\HH)$, we have
\begin{equation}
\label{e:boo}
(\forall n\in\NN)\quad\alpha\|x_n-z\|^2\leq\scal{x_n-z}{W_n(x_n-z)}
=\|x_n-z\|_{W_n}^{2}.
\end{equation}
Hence, since \ref{p:1i} asserts that $(\|x_n-z\|_{W_n})_{n\in\NN}$
is bounded, so is $(x_n)_{n\in\NN}$.
\end{proof}

The next result concerns weak convergence. In the case of standard
Fej\'er monotonicity \eqref{e:fejer}, it appears in 
\cite[Lemma~6]{Bro67b} and, in the case of quasi-Fej\'er 
monotonicity \eqref{e:qfejer}, it appears in 
\cite[Proposition~1.3]{Albe98}.

\begin{theorem}
\label{t:1} 
Let $\alpha\in\RPP$, let $\phi\colon\RP\to\RP$ be strictly 
increasing and such that $\lim_{t\to\pinf}\phi(t)=\pinf$, 
let $(W_n)_{n\in\NN}$ and $W$ be operators in $\BP_{\alpha}(\HH)$
such that $W_n\to W$ pointwise, let $C$ be a nonempty subset of 
$\HH$, and let $(x_n)_{n\in\NN}$ be a sequence in $\HH$ such 
that \eqref{e:vmqf1} is satisfied. Then 
$(x_n)_{n\in\NN}$ converges weakly to a point in $C$ 
if and only if every weak sequential cluster point of 
$(x_n)_{n\in\NN}$ is in $C$.
\end{theorem}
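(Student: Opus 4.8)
The plan is to prove the nontrivial implication: assuming every weak sequential cluster point of $(x_n)_{n\in\NN}$ lies in $C$, I must show the whole sequence converges weakly to a single point of $C$. (The converse is immediate, since if $x_n\weakly x\in C$ then $x$ is itself a weak cluster point, and weak limits of subsequences coincide with $x$.) By Proposition~\ref{p:1}\ref{p:1ii} the sequence is bounded, so the Eberlein--\v{S}mulian theorem guarantees at least one weak sequential cluster point, and by hypothesis it lies in $C$; thus $C$ meets the set of cluster points. Since the unit ball of $\HH$ is weakly sequentially compact, it suffices to show that $(x_n)_{n\in\NN}$ has \emph{exactly one} weak sequential cluster point.

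\textbf{The key device.} Suppose $x$ and $y$ are two weak sequential cluster points, with $x_{k_n}\weakly x$ and $x_{l_n}\weakly y$; by hypothesis $x,y\in C$. The standard trick is to study the scalar sequence obtained by expanding the squared quasi-Fej\'er quantities. For a fixed $z\in C$, Proposition~\ref{p:1}\ref{p:1i} tells me that $(\|x_n-z\|_{W_n})_{n\in\NN}$ converges; since $\phi$ is merely monotone here but the norms themselves converge, I will work directly with the converging sequences $(\|x_n-z\|_{W_n}^2)_{n\in\NN}$. The plan is to apply this with $z=x$ and $z=y$ and to examine the difference
\begin{equation}
\label{e:mydiff}
\|x_n-x\|_{W_n}^2-\|x_n-y\|_{W_n}^2
=2\scal{x_n}{W_n(y-x)}+\scal{W_nx}{x}-\scal{W_ny}{y}.
\end{equation}
By Proposition~\ref{p:1}\ref{p:1i} applied to each of $x$ and $y$, the left-hand side of \eqref{e:mydiff} converges; call its limit $\ell$. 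The task reduces to evaluating the limit of the right-hand side along the two subsequences and forcing a contradiction unless $x=y$.

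\textbf{Passing to the limit.} This is where the pointwise convergence $W_n\to W$ enters. Since $W_n\to W$ pointwise and $W_n-W$ is self-adjoint with uniformly bounded norm (by $\mu=\sup_n\|W_n\|<\pinf$, which follows from $W_n\to W$ and uniform boundedness), I can pass to limits: $\scal{W_nx}{x}\to\scal{Wx}{x}$ and $\scal{W_ny}{y}\to\scal{Wy}{y}$ unproblematically. The delicate term is $\scal{x_n}{W_n(y-x)}$, a product of a weakly convergent factor with a strongly convergent one. Writing $W_n(y-x)=W(y-x)+(W_n-W)(y-x)$, the second piece tends to $0$ strongly while $(x_n)_{n\in\NN}$ is bounded, so $\scal{x_n}{(W_n-W)(y-x)}\to0$; and $\scal{x_n}{W(y-x)}$ is just a continuous linear functional evaluated along $(x_n)_{n\in\NN}$. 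Evaluating the limit of the right-hand side of \eqref{e:mydiff} along $(k_n)_{n\in\NN}$ gives $2\scal{x}{W(y-x)}+\scal{Wx}{x}-\scal{Wy}{y}$, and along $(l_n)_{n\in\NN}$ gives $2\scal{y}{W(y-x)}+\scal{Wx}{x}-\scal{Wy}{y}$; since both equal the common limit $\ell$, subtracting yields $2\scal{x-y}{W(y-x)}=0$, that is, $\scal{W(x-y)}{x-y}=0$. Because $W\in\BP_{\alpha}(\HH)$ with $\alpha>0$, this forces $\alpha\|x-y\|^2\le\scal{W(x-y)}{x-y}=0$, whence $x=y$.

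\textbf{Main obstacle.} The substantive point is the handling of the mixed term $\scal{x_n}{W_n(y-x)}$, where a weakly convergent sequence is paired against an operator that is only converging \emph{pointwise}, not in norm. The splitting $W_n(y-x)=W(y-x)+(W_n-W)(y-x)$ is exactly what decouples the two modes of convergence, using boundedness of $(x_n)_{n\in\NN}$ to kill the error term and reserving the genuine weak limit for the fixed vector $W(y-x)$. I expect this to be the crux; the remaining algebra in \eqref{e:mydiff} and the final coercivity estimate are routine. One should also note the logical structure of the quantifiers in \eqref{e:vmqf1}: the summable sequence $(\eta_n)_{n\in\NN}$ is common to all $z\in C$, while $(\varepsilon_n)_{n\in\NN}$ may depend on $z$; this is exactly the form needed so that Proposition~\ref{p:1}\ref{p:1i} applies separately to $z=x$ and $z=y$, which is all I require.
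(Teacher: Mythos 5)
Your proof is correct and follows essentially the same route as the paper: both arguments take two weak cluster points $x,y\in C$, use Proposition~\ref{p:1}\ref{p:1i} to make the mixed term $\scal{x_n}{W_n(x-y)}$ converge via the polarization-type identity, pass to the limit along each subsequence by pairing weak convergence of $(x_n)_{n\in\NN}$ with strong convergence of $W_n(x-y)\to W(x-y)$, and conclude $x=y$ from $\scal{x-y}{W(x-y)}\geq\alpha\|x-y\|^2$. The only cosmetic difference is that you prove the weak--strong pairing limit by hand with the splitting $W_n(y-x)=W(y-x)+(W_n-W)(y-x)$, where the paper cites a lemma from the literature.
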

\begin{proof}
Necessity is clear. To show sufficiency, suppose that every weak 
sequential cluster point of $(x_n)_{n\in\NN}$ is in $C$, and let 
$x$ and $y$ be two such points,
say $x_{k_n}\weakly x$ and $x_{l_n}\weakly y$. Then it follows from
Proposition~\ref{p:1}\ref{p:1i} that $(\|x_n-x\|_{W_n})_{n\in\NN}$ 
and $(\|x_n-y\|_{W_n})_{n\in\NN}$ converge. Moreover,
$\|x\|_{W_n}^2=\scal{W_nx}{x}\to\scal{Wx}{x}$ and, likewise,
$\|y\|_{W_n}^2\to\scal{Wy}{y}$. Therefore, since
\begin{equation}
(\forall n\in\NN)\quad\scal{W_nx_n}{x-y} 
=\frac12\big(\|x_n-y\|_{W_n}^2-\|x_n-x\|_{W_n}^2
+\|x\|_{W_n}^2-\|y\|_{W_n}^2\big),
\end{equation}
the sequence $(\scal{W_nx_n}{x-y})_{n\in\NN}$ converges, say 
$\scal{W_n x_n}{x-y}\to\lambda\in\RR$, which implies that
\begin{equation}
\label{e:elnido2012-03-07b}
\scal{x_n}{W_n(x-y)}\to\lambda\in\RR.
\end{equation}
However, since $x_{k_n}\weakly x$ and $W_{k_n}(x-y)\to W(x-y)$, 
it follows from \eqref{e:elnido2012-03-07b} and 
\cite[Lemma~2.41(iii)]{Livre1} that $\scal{x}{W(x-y)}=\lambda$. 
Likewise, passing to the limit along the subsequence 
$(x_{l_n})_{n\in\NN}$ in \eqref{e:elnido2012-03-07b} yields
$\scal{y}{W(x-y)}=\lambda$. Thus,
\begin{equation}
\label{e:elnido2012-03-08b}
0=\scal{x}{W(x-y)}-\scal{y}{W(x-y)} 
=\scal{x-y}{W(x-y)}\geq\alpha\|x-y\|^2.
\end{equation}
This shows that $x=y$. Upon invoking 
Proposition~\ref{p:1}\ref{p:1ii} and
\cite[Lemma~2.38]{Livre1}, 
we conclude that $x_n\weakly x$.
\end{proof}

Lemma~\ref{l:ES175} provides instances in which the 
conditions imposed on $(W_n)_{n\in\NN}$ in 
Theorem~\ref{t:1} are satisfied. Next, we present
a characterization of strong convergence which can be found 
in \cite[Theorem~3.11]{Else01} in the special case of 
quasi-Fej\'er monotonicity \eqref{e:qfejer}.

\begin{proposition}
\label{p:monodc} 
Let $\alpha\in\RPP$, let $\chi\in\left[1,\pinf\right[$, and let 
$\phi\colon\RP\to\RP$ be an increasing upper semicontinuous 
function vanishing only at $0$ and such that 
\begin{equation}
\label{e:quaso}
\big(\forall(\xi_1,\xi_2)\in\left[0,\pinf\right[^{2}\big)\quad 
\phi(\xi_1+\xi_2)\leq\chi\big(\phi(\xi_1)+\phi(\xi_2)\big).
\end{equation}
Let $(W_n)_{n\in\NN}$ be a sequence in $\BP_{\alpha}(\HH)$ such 
that $\mu=\sup_{n\in\NN}\|W_n\|<\pinf$, let $C$ be a nonempty 
closed subset of $\HH$, and let $(x_n)_{n\in\NN}$ be a sequence 
in $\HH$ such that \eqref{e:vmqf2} is satisfied. Then 
$(x_n)_{n\in\NN}$ converges strongly to a point in $C$ if 
and only if $\varliminf d_C(x_n)=0$.
\end{proposition}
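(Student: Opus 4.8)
The plan is to dispatch necessity immediately and to obtain sufficiency by showing that $(x_n)_{n\in\NN}$ is a Cauchy sequence whose limit lies in $C$. For necessity, if $x_n\to x\in C$ then $0\leq d_C(x_n)\leq\|x_n-x\|\to 0$, so $\varliminf d_C(x_n)=0$.

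For sufficiency I would first record two consequences of the hypotheses on $\phi$ that will be used repeatedly. From \eqref{e:quaso} and an induction giving $\phi(2^kt)\leq(2\chi)^k\phi(t)$, for every $c\in\RPP$ there is $\kappa_c\in\RPP$ with $(\forall t\in\RP)$ $\phi(ct)\leq\kappa_c\phi(t)$; this is what will let me pass between the moving semi-norms $\|\cdot\|_{W_n}$ and the fixed norm $\|\cdot\|$, since $W_n\in\BP_\alpha(\HH)$ and $\mu=\sup_n\|W_n\|$ yield $\sqrt{\alpha}\,\|y\|\leq\|y\|_{W_n}\leq\sqrt{\mu}\,\|y\|$. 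From monotonicity, upper semicontinuity, and the fact that $\phi$ vanishes only at $0$, I get that $\phi$ is right-continuous at $0$ (hence $t_n\to 0\Rightarrow\phi(t_n)\to 0$) and, conversely, that $\phi(t_n)\to 0\Rightarrow t_n\to 0$, for otherwise $\phi(t_n)\geq\phi(\epsilon)>0$ along a subsequence.

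Next I would exploit the \emph{stationary} form \eqref{e:vmqf2}: here $(\eta_n)_{n\in\NN}$ and $(\varepsilon_n)_{n\in\NN}$ do not depend on $z$, so I may freely substitute a moving target $z\in C$. With $\tau=\prod_{n\in\NN}(1+\eta_n)<\pinf$, telescoping \eqref{e:vmqf2} from an index $N$ gives, for every $z\in C$ and every $n\geq N$,
\begin{equation*}
\phi(\|x_n-z\|_{W_n})\leq\tau\,\phi(\|x_N-z\|_{W_N})+\tau\sum_{k\geq N}\varepsilon_k.
\end{equation*}
Given $\epsilon\in\RPP$, I use $\varliminf d_C(x_n)=0$ together with the summability of $(\varepsilon_n)_{n\in\NN}$ to choose $N$ and a point $z\in C$ for which $\|x_N-z\|$ and the tail $\tau\sum_{k\geq N}\varepsilon_k$ are so small that the right-hand side stays below a prescribed threshold for all $n\geq N$; the first term is handled by right-continuity of $\phi$ at $0$ and $\|x_N-z\|_{W_N}\leq\sqrt{\mu}\,\|x_N-z\|$.

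Finally I would combine two such estimates. For $m,n\geq N$, the triangle inequality $\|x_n-x_m\|\leq\|x_n-z\|+\|x_m-z\|$, monotonicity of $\phi$, the quasi-triangle inequality \eqref{e:quaso}, and the scaling bound $\phi(\alpha^{-1/2}t)\leq\kappa\,\phi(t)$ (with $\kappa=\kappa_{\alpha^{-1/2}}$ and $\|x_n-z\|\leq\alpha^{-1/2}\|x_n-z\|_{W_n}$) yield
\begin{equation*}
\phi(\|x_n-x_m\|)\leq\chi\kappa\big(\phi(\|x_n-z\|_{W_n})+\phi(\|x_m-z\|_{W_m})\big),
\end{equation*}
which is uniformly small for $m,n\geq N$. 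Inverting $\phi$ near $0$ shows $(x_n)_{n\in\NN}$ is Cauchy, so $x_n\to x$ for some $x\in\HH$; continuity of $d_C$ gives $d_C(x_n)\to d_C(x)$, and $\varliminf d_C(x_n)=0$ forces $d_C(x)=0$, whence $x\in C$ since $C$ is closed. I expect the main obstacle to be the variable metric itself: the Fej\'er estimate \eqref{e:vmqf2} lives in the moving semi-norms $\|\cdot\|_{W_n}$, whereas the triangle inequality that glues two indices together lives in the fixed norm, and reconciling the two \emph{uniformly} in $n$ is exactly where \eqref{e:quaso} becomes indispensable, serving both to absorb the metric-scaling constants and as a surrogate triangle inequality for $\phi$.
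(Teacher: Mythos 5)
Your proof is correct, and it differs from the paper's in one substantive structural respect. The paper proceeds in two phases: it first upgrades $\varliminf d_C(x_n)=0$ to $d_C(x_n)\to 0$ by introducing the auxiliary sequence $\xi_n=\inf_{z\in C}\|x_n-z\|_{W_n}$ and showing, via approximating points $(z_{n,k})_{k\in\NN}$, upper semicontinuity, and Lemma~\ref{l:7}, that $(\phi(\xi_n))_{n\in\NN}$ converges -- hence converges to $0$ once it vanishes along a subsequence; only then does it derive the Cauchy estimate \eqref{e:khiemton}, whose right-hand side involves $\phi(d_C(x_n))$ and therefore genuinely needs the full limit $d_C(x_n)\to 0$. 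You bypass this first phase entirely: by telescoping \eqref{e:vmqf2} from a single well-chosen anchor index $N$ taken on the $\varliminf$ subsequence, paired with a near-best $z\in C$ for that $N$, you control $\phi(\|x_n-z\|_{W_n})$ uniformly for all $n\geq N$ at once, and the surrogate triangle inequality \eqref{e:quaso} then glues any two indices $m,n\geq N$ together directly. This shortens the argument and dispenses with the auxiliary infimum sequence and with Lemma~\ref{l:7} in the sufficiency part, at the modest cost of not recording $d_C(x_n)\to 0$ as a by-product (you recover membership of the limit in $C$ from continuity of $d_C$ instead, which is equally valid). Everything else matches the paper's mechanics: the essential use of the stationarity of $(\varepsilon_n)_{n\in\NN}$ and $(\eta_n)_{n\in\NN}$ in $z$, the scaling bound $\phi(ct)\leq\kappa_c\phi(t)$ obtained by iterating $\phi(2t)\leq 2\chi\phi(t)$ (the paper's choice of $N>\sqrt{\mu}$ and the constant $\rho$ play exactly this role), the two-sided equivalence $\sqrt{\alpha}\|\cdot\|\leq\|\cdot\|_{W_n}\leq\sqrt{\mu}\|\cdot\|$, and the inversion of $\phi$ near $0$ using monotonicity together with the fact that $\phi$ vanishes only at $0$.
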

\begin{proof} 
Necessity is clear. For sufficiency, suppose that
$\varliminf d_C(x_n)=0$ and set $(\forall n\in\NN)$ 
$\xi_n=\inf_{z\in C}\|x_n-z\|_{W_n}$. For every $n\in\NN$,
let $(z_{n,k})_{k\in\NN}$ be a sequence in $C$ such that 
$\|x_n-z_{n,k}\|_{W_n}\to\xi_n$. Then, since $\phi$ is increasing,
\eqref{e:vmqf2} yields
\begin{equation}
(\forall n\in\NN)(\forall k\in\NN)\quad\phi(\xi_{n+1}) 
\leq\phi(\|x_{n+1}-z_{n,k}\|_{W_{n+1}})
\leq(1+\eta_n)\phi(\|x_n-z_{n,k}\|_{W_n})+\varepsilon_n.
\end{equation}
Hence, it follows from the upper semicontinuity of $\phi$ that 
\begin{align}
(\forall n\in\NN)\quad\phi(\xi_{n+1}) 
&\leq(1+\eta_n)\varlimsup_{k\to\pinf}\phi(\|x_n-z_{n,k}\|_{W_n})
+\varepsilon_n\nonumber\\
&\leq(1+\eta_n)\phi(\xi_n)+\varepsilon_n.
\end{align}
Therefore, by Lemma~\ref{l:7},
\begin{equation}
\label{e:elnido2012-03-09a}
\big(\phi(\xi_n)\big)_{n\in\NN}\quad\text{converges}.
\end{equation}
Moreover, since
\begin{equation}
\label{e:kj1}
(\forall n\in\NN)(\forall m\in\NN)(\forall x\in\HH)\quad
\alpha\|x_n-x\|^2\leq\|x_n-x\|_{W_m}^{2}\leq 
\mu\|x_n-x\|^2,
\end{equation}
we have
\begin{equation}
\label{e:kj2}
(\forall n\in\NN)\quad\sqrt{\alpha}d_{C}(x_n)\leq\xi_n\leq 
\sqrt{\mu}d_{C}(x_n).
\end{equation}
Consequently, since $\varliminf d_C(x_n)=0$, we derive from 
\eqref{e:kj2} that $\varliminf\xi_n=0$. Let us extract a subsequence
$(\xi_{k_n})_{n\in\NN}$ such that $\xi_{k_n}\to 0$.
Since $\phi$ is upper semicontinuous, we have
$0\leq\varliminf\phi(\xi_{k_n})\leq\varlimsup\phi(\xi_{k_n})\leq
\phi(0)=0$. In view of \eqref{e:elnido2012-03-09a}, we therefore 
obtain $\phi(\xi_n)\to 0$ and, in turn, $\xi_n\to 0$.
Hence, we deduce from \eqref{e:kj2} that 
\begin{equation}
\label{e:dhs}
d_C(x_n)\to 0. 
\end{equation}
Next, let $N$ be the smallest integer such that $N>\sqrt{\mu}$,
and set $\rho=\chi^{N-1}+\sum_{k=1}^{N-1}\chi^k$ if $N>1$; 
$\rho=1$ if $N=1$. Moreover, let $x\in C$ and let $m$ and $n$ 
be strictly positive integers. Using \eqref{e:kj1}, the 
monotonicity of $\phi$, and \eqref{e:quaso}, we obtain
\begin{equation}
\label{e:kj1s}
\phi\big(\|x_n-x\|_{W_m}\big)
\leq\phi\big(\sqrt{\mu}\|x_n-x\|\big)
\leq\phi\big(N\|x_n-x\|\big)
\leq\rho\phi\big(\|x_n-x\|\big).
\end{equation}
Now set $\tau=\prod_{k\in\NN}(1+\eta_k)$. Then $\tau<\pinf$
\cite[Theorem~3.7.3]{Knop56} and we derive from \eqref{e:quaso}, 
\eqref{e:vmqf2}, and \eqref{e:kj1s} that
\begin{align}
\chi^{-1}\phi\big(\|x_{n+m}-x_n\|_{W_{n+m}}\big)
&\leq\chi^{-1}\big(\phi(\|x_{n+m}-x\|_{W_{n+m}}+
\|x_n-x\|_{W_{n+m}}\big)\nonumber\\
&\leq\phi\big(\|x_{n+m}-x\|_{W_{m+n}}\big)+
\phi\big(\|x_n-x\|_{W_{m+n}}\big)\nonumber\\
&\leq\tau\bigg(\phi\big(\|x_n-x\|_{W_n}\big)+\sum_{k=n}^{n+m-1}
\varepsilon_k\bigg)+\phi\big(\|x_n-x\|_{W_{m+n}}\big)\nonumber\\
&\leq\rho(1+\tau)\phi\big(\|x_n-x\|\big)
+\tau\sum_{k\geq n}\varepsilon_k.
\end{align}
Therefore, upon taking the infimum over $x\in C$, we obtain
by upper semicontinuity of $\phi$
\begin{equation}
\label{e:khiemton}
\phi\big(\|x_{n+m}-x_n\|_{W_{n+m}}\big)
\leq\chi\rho(1+\tau)\phi\big(d_C(x_n)\big)
+\chi\tau\sum_{k\geq n}\varepsilon_k.
\end{equation}
Hence, appealing to \eqref{e:dhs} and the summability of
$(\varepsilon_k)_{k\in\NN}$, we deduce from \eqref{e:khiemton}
that, as $n\to\pinf$, 
$\phi(\|x_{n+m}-x_n\|_{W_{n+m}})\to 0$ and, hence, 
$\alpha\|x_{n+m}-x_n\|^2\leq\|x_{n+m}-x_n\|^2_{W_{n+m}}\to 0$.
Thus, $(x_n)_{n\in\NN}$ is a Cauchy sequence in $\HH$ and 
there exists $\overline{x}\in\HH$ such that $x_n\to\overline{x}$.
By continuity of $d_C$ and \eqref{e:dhs}, we obtain
$d_C(\overline{x})=0$ and, since $C$ is closed, 
$\overline{x}\in C$. 
\end{proof}

\section{The quadratic case}
\label{sec:4}

In this section, we focus on the important case when 
$\phi=|\cdot|^2$ in Definition~\ref{d:vmqf}. Our first 
result states that variable metric quasi-Fej\'er monotonicity 
``spreads'' to the convex hull of the target set.

\begin{proposition}
\label{p:jpa} 
Let $\alpha\in\RPP$, let $(\eta_n)_{n\in\NN}$ be a sequence 
in $\ell_+^1(\NN)$, let $(W_n)_{n\in\NN}$ be a 
sequence in $\BP_{\alpha}(\HH)$ such that 
\begin{equation}
\label{e:ronny-J}
\mu=\sup_{n\in\NN}\|W_n\|<\pinf\quad\text{and}\quad
(\forall n\in\NN)\quad (1+\eta_n)W_n\succcurlyeq W_{n+1}.
\end{equation}
Let $C$ be a nonempty subset of $\HH$ and let $(x_n)_{n\in\NN}$ be 
a sequence in $\HH$ such that  
\begin{multline}
\label{e:2012-04-24'''}
\big(\exi(\eta_n)_{n\in\NN}\in\ell_+^1(\NN)\big)
\big(\forall z\in C\big)\big(\exi(\varepsilon_n)_{n\in\NN}\in
\ell_+^1(\NN)\big)(\forall n\in\NN)\\
\|x_{n+1}-z\|_{W_{n+1}}^2\leq (1+\eta_n)
\|x_n-z\|_{W_n}^2+\varepsilon_n.
\end{multline}
Then the following hold.
\begin{enumerate}
\item 
\label{p:jpai}
$(x_n)_{n\in\NN}$ is $|\cdot|^2$-quasi-Fej\'er monotone with 
respect to $\conv C$ relative to $(W_n)_{n\in\NN}$.
\item
\label{p:jpaii} 
For every $y\in\cconv C$, $(\|x_n-y\|_{W_n})_{n\in\NN}$ 
converges.
\end{enumerate}
\end{proposition}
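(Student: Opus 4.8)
The plan for \ref{p:jpai} is to exploit the convex-combination (polarization) identity: for any finite convex combination $y=\sum_{i\in I}\lambda_i z_i$ with $z_i\in C$ and any $W\in\BP_\alpha(\HH)$, since $\alpha>0$ the semi-norm $\|\cdot\|_W$ derives from a genuine inner product $\scal{\cdot}{\cdot}_W$ and therefore
\[
\|v-y\|_W^2=\sum_{i\in I}\lambda_i\|v-z_i\|_W^2
-\frac12\sum_{(i,j)\in I^2}\lambda_i\lambda_j\|z_i-z_j\|_W^2 .
\]
Applying this at $v=x_{n+1}$ with metric $W_{n+1}$, invoking \eqref{e:2012-04-24'''} for each individual point $z_i\in C$ (with its own summable sequence $(\varepsilon_n^{(i)})_{n\in\NN}$), and then using the identity backwards at $v=x_n$ with metric $W_n$, I obtain
\[
\|x_{n+1}-y\|_{W_{n+1}}^2\leq(1+\eta_n)\|x_n-y\|_{W_n}^2+\varepsilon_n',
\quad\text{with}\quad
\varepsilon_n'=\big((1+\eta_n)s_n-s_{n+1}\big)+\sum_{i\in I}\lambda_i\varepsilon_n^{(i)},
\]
where $s_n=\tfrac12\sum_{(i,j)\in I^2}\lambda_i\lambda_j\|z_i-z_j\|_{W_n}^2$.

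The main obstacle is to verify that $(\varepsilon_n')_{n\in\NN}\in\ell_+^1(\NN)$, and this is exactly where all three hypotheses in \eqref{e:ronny-J} come together. The convex combination $\sum_{i\in I}\lambda_i\varepsilon_n^{(i)}$ is plainly nonnegative and summable. For the remaining term, the Loewner condition $(1+\eta_n)W_n\succcurlyeq W_{n+1}$ applied to each vector $z_i-z_j$ gives $(1+\eta_n)s_n\geq s_{n+1}$, so $(1+\eta_n)s_n-s_{n+1}\geq0$; and the uniform bound $\|W_n\|\leq\mu$ yields $s_n\leq\tfrac{\mu}{2}\sum_{(i,j)\in I^2}\lambda_i\lambda_j\|z_i-z_j\|^2=:S<\pinf$. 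Writing $(1+\eta_n)s_n-s_{n+1}=(s_n-s_{n+1})+\eta_n s_n$ and summing over $n\leq N$, the first part telescopes to $s_0-s_{N+1}\leq s_0$ while the second is bounded by $S\sum_{n\in\NN}\eta_n<\pinf$; since every term is nonnegative, the series converges. Hence $(\varepsilon_n')_{n\in\NN}\in\ell_+^1(\NN)$, and because $(\eta_n)_{n\in\NN}$ is the \emph{same} sequence for every $y$, this is precisely \eqref{e:vmqf1} with $\phi=|\cdot|^2$ and target set $\conv C$.

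For \ref{p:jpaii}, I would first observe that \ref{p:jpai} means $(x_n)_{n\in\NN}$ satisfies \eqref{e:vmqf1} with $\phi=|\cdot|^2$ relative to $\conv C$; since $\phi=|\cdot|^2$ is strictly increasing on $\RP$ with $\lim_{t\to\pinf}\phi(t)=\pinf$, Proposition~\ref{p:1}\ref{p:1i} yields convergence of $(\|x_n-y\|_{W_n})_{n\in\NN}$ for every $y\in\conv C$. To pass to $y\in\cconv C$, pick $y_k\in\conv C$ with $y_k\to y$. The triangle inequality for $\|\cdot\|_{W_n}$ together with $\|\cdot\|_{W_n}\leq\sqrt{\mu}\,\|\cdot\|$ gives the \emph{uniform} estimate
\[
\big|\,\|x_n-y\|_{W_n}-\|x_n-y_k\|_{W_n}\,\big|
\leq\|y-y_k\|_{W_n}\leq\sqrt{\mu}\,\|y-y_k\|=:\delta_k
\qquad(\forall n\in\NN).
\]
As $(\|x_n-y_k\|_{W_n})_{n\in\NN}$ converges for each $k$, it follows that $\varlimsup_n\|x_n-y\|_{W_n}-\varliminf_n\|x_n-y\|_{W_n}\leq 2\delta_k\to0$, so the two coincide and $(\|x_n-y\|_{W_n})_{n\in\NN}$ converges.
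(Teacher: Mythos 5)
Your proposal is correct and follows essentially the same route as the paper: the convex-combination variance identity $\|v-y\|_W^2=\sum_i\lambda_i\|v-z_i\|_W^2-\tfrac12\sum_{i,j}\lambda_i\lambda_j\|z_i-z_j\|_W^2$, with the cross term $s_n$ controlled by the Loewner monotonicity and the uniform bound $\mu$ (the paper packages your telescoping step as an appeal to Lemma~\ref{l:7} and bounds $\sum_i\lambda_i\varepsilon_n^{(i)}$ by $\max_i\varepsilon_n^{(i)}$, which is immaterial), followed by the same $\sqrt{\mu}\,\|y_k-y\|$ density argument for $\cconv C$. No gaps.
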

\begin{proof}
Let us fix $z\in\conv C$. There exist finite sets 
$\{z_i\}_{i\in I}\subset C$ and  
$\{\lambda_i\}_{i\in I}\subset\zerounf$ such that 
\begin{equation}
\label{e:fe}
\sum_{i\in I}\lambda_i=1\quad\text{and}\quad
z=\sum_{i\in I}\lambda_iz_i.
\end{equation}
For every $i\in I$, it follows from \eqref{e:2012-04-24'''} that 
there exists a sequence 
$(\varepsilon_{i,n})_{n\in\NN}\in\ell_+^1(\NN)$ such that 
\begin{equation} 
\label{e:fe1}
(\forall n\in\NN)\quad
\|x_{n+1}-z_i\|_{W_{n+1}}^2 
\leq (1+\eta_n)\|x_n-z_i\|_{W_n}^2+\varepsilon_{i,n}.
\end{equation}
Now set 
\begin{equation}
(\forall n\in\NN)\quad
\begin{cases}
\alpha_n=\Frac{1}{2}\sum_{i\in I}\sum_{j\in I}\lambda_i\lambda_j
\|z_i-z_j\|_{W_n}^2\\
\varepsilon_n=(1+\eta_n)\alpha_n-\alpha_{n+1}+
\max\{\varepsilon_{1,n},\ldots,\varepsilon_{m,n}\}.
\end{cases}
\end{equation}
Then $(\max\{\varepsilon_{1,n},\ldots,\varepsilon_{m,n}\})_{n\in\NN}
\in\ell_+^1(\NN)$ and, by \eqref{e:ronny-J}, $(\forall n\in\NN)$
$(1+\eta_n)\alpha_n\geq\alpha_{n+1}$. Hence, Lemma~\ref{l:7}
asserts that $(\alpha_n)_{n\in\NN}$ converges, which implies that
$(\varepsilon_n)_{n\in\NN}\in \ell_+^1(\NN)$. 

\ref{p:jpai}:
Using \eqref{e:fe}, \cite[Lemma~2.13(ii)]{Livre1}, and 
\eqref{e:fe1}, we obtain
\begin{align}
(\forall n\in\NN)\quad\|x_{n+1}-z\|_{W_{n+1}}^2 
&=\sum_{i\in I}\lambda_i\|x_{n+1}-z_i\|_{W_{n+1}}^2
-\alpha_{n+1}\nonumber\\
&\leq (1+\eta_n)\sum_{i\in I}\lambda_i \|x_{n}-z_i\|_{W_{n}}^2 
-\alpha_{n+1}+\max\{\varepsilon_{1,n},\ldots,\varepsilon_{m,n}\} 
\nonumber\\
&=(1+\eta_n)\|x_{n}-z\|_{W_{n}}^2 
+(1+\eta_n)\alpha_n-\alpha_{n+1}
+\max\{\varepsilon_{1,n},\ldots,\varepsilon_{m,n}\} \nonumber\\
&=(1+\eta_n)\|x_n-z\|_{W_n}^2+ \varepsilon_n.
\end{align}

\ref{p:jpaii}: 
It follows from \cite[Lemma 2.13(ii)]{Livre1} that
\begin{equation}
(\forall n\in\NN)\quad
\|x_n-z\|_{W_n}^2 
=\sum_{i\in I}\lambda_i\|x_n-z_i\|_{W_n}^2-\alpha_n.
\end{equation}
However, $(\alpha_n)_{n\in\NN}$ converges and, for every $i\in I$, 
Proposition~\ref{p:1}\ref{p:1i} asserts that 
$(\|x_n-z_i\|_{W_n})_{n\in\NN}$ converges. Hence,
$(\|x_n-z\|_{W_n})_{n\in\NN}$ converges. Now let $y\in\cconv C$. 
Then there exists a sequence $(y_k)_{k\in\NN}$ in $\conv C$ 
such that $y_k\to y$. It follows from \ref{p:jpai} and 
Proposition~\ref{p:1}\ref{p:1i} that, for every $k\in\NN$, 
$(\|x_n-y_k\|_{W_n})_{n\in\NN}$ converges. Moreover, we have 
\begin{align}
(\forall k\in\NN)(\forall n\in\NN)\quad -\sqrt{\mu} 
\|y_k-y\|&\leq-\|y_k-y\|_{W_n}\nonumber\\
&\leq\|x_n-y\|_{W_n}-\|x_n-y_k\|_{W_n}\nonumber\\
&\leq\|y_k-y\|_{W_n}\nonumber\\
&\leq\sqrt{\mu}\|y_k-y\|.
\end{align}
Consequently,
\begin{align}
(\forall k\in\NN)\quad -\sqrt{\mu}\|y_k-y\|
&\leq\varliminf\|x_n-y\|_{W_n}-\lim\|x_n-y_k\|_{W_n}\nonumber\\
&\leq\varlimsup\|x_n-y\|_{W_n}-\lim\|x_n-y_k\|_{W_n}\nonumber\\
&\leq\sqrt{\mu}\|y_k-y\|.
\end{align}
Taking the limit as $k\to\pinf$ yields
$\lim_{n\to\pinf}\|x_n-y\|_{W_n}=\lim_{k\to\pinf}\lim_{n\to\pinf}
\|x_n-y_k\|_{W_n}$.
\end{proof}

Standard Fej\'er monotone sequences may fail to converge weakly 
and, even when they converge weakly, strong convergence may fail 
\cite{Else01,Hund04}. However, if the target set $C$ is closed 
and convex in \eqref{e:fejer}, the projected sequence 
$(P_Cx_n)_{n\in\NN}$ converges strongly; see 
\cite[Theorem~2.16(iv)]{Baus96} and \cite[Remark~1]{Reic79}.
This property, which remains true in the quasi-Fej\'erian 
case \cite[Proposition~3.6(iv)]{Else01}, is extended below.

\begin{proposition}
\label{p:guad2012}
Let $\alpha\in\RPP$, let $(\eta_n)_{n\in\NN}$ be a sequence in 
$\ell_+^1(\NN)$, let $(W_n)_{n\in\NN}$ be a uniformly bounded 
sequence in $\BP_{\alpha}(\HH)$, let $C$ be a nonempty closed 
convex subset of $\HH$, and let $(x_n)_{n\in\NN}$ be a sequence 
in $\HH$ such that  
\begin{multline}
\label{e:2012-05-24}
\big(\exi(\varepsilon_n)_{n\in\NN}\in\ell_+^1(\NN)\big)
\big(\exi(\eta_n)_{n\in\NN}\in\ell_+^1(\NN)\big)
(\forall z\in C)(\forall n\in\NN)\\
\|x_{n+1}-z\|^2_{W_{n+1}}\leq
(1+\eta_n)\|x_n-z\|^2_{W_n}+\varepsilon_n.
\end{multline}
Then $(P_C^{W_n}x_n)_{n\in\NN}$ converges strongly.
\end{proposition}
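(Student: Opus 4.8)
The plan is to establish that the sequence $p_n=P_C^{W_n}x_n$ is a Cauchy sequence for the underlying norm $\|\cdot\|$, which by completeness of $\HH$ delivers strong convergence. I write $\xi_n=\|x_n-p_n\|_{W_n}=\inf_{z\in C}\|x_n-z\|_{W_n}$ for the $W_n$-distance of $x_n$ to $C$. Because each $W_n$ lies in $\BP_\alpha(\HH)$ with $\alpha>0$ and is bounded, $\scal{\cdot}{\cdot}_{W_n}$ is a scalar product whose norm is equivalent to $\|\cdot\|$; hence $p_n$ is well defined and characterized by $\scal{x_n-p_n}{y-p_n}_{W_n}\leq 0$ for every $y\in C$. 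Expanding $\|x_n-y\|_{W_n}^2=\|x_n-p_n\|_{W_n}^2-2\scal{x_n-p_n}{y-p_n}_{W_n}+\|p_n-y\|_{W_n}^2$ and dropping the nonnegative term $-2\scal{x_n-p_n}{y-p_n}_{W_n}\geq 0$ yields the Pythagoras-type estimate
\[
(\forall n\in\NN)(\forall y\in C)\quad
\|x_n-y\|_{W_n}^2\geq\xi_n^2+\|p_n-y\|_{W_n}^2.
\]
I would first record that $(\xi_n^2)_{n\in\NN}$ converges: fixing any $z\in C$, \eqref{e:2012-05-24} together with $\xi_{n+1}\leq\|x_{n+1}-z\|_{W_{n+1}}$ gives $\xi_{n+1}^2\leq(1+\eta_n)\|x_n-z\|_{W_n}^2+\varepsilon_n$, and taking the infimum over $z\in C$ produces $\xi_{n+1}^2\leq(1+\eta_n)\xi_n^2+\varepsilon_n$, so Lemma~\ref{l:7} applies and $\xi_n^2\to\ell^2$ for some $\ell\in\RP$.

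Next, fix $n\in\NN$ and $m\geq n$. Applying the Pythagoras estimate at index $m$ to the feasible point $y=p_n\in C$ gives
\[
\|p_m-p_n\|_{W_m}^2\leq\|x_m-p_n\|_{W_m}^2-\xi_m^2.
\]
To bound $\|x_m-p_n\|_{W_m}^2$ I would iterate \eqref{e:2012-05-24} with the \emph{fixed} target $z=p_n$ from index $n$ to index $m$; here it is crucial that \eqref{e:2012-05-24} holds with one and the same pair $(\eta_k)_{k\in\NN}$, $(\varepsilon_k)_{k\in\NN}$ for every $z\in C$. A routine induction on $m$, using $\|x_n-p_n\|_{W_n}^2=\xi_n^2$, then yields
\[
\|x_m-p_n\|_{W_m}^2\leq\bigg(\prod_{k=n}^{m-1}(1+\eta_k)\bigg)\xi_n^2
+\tau\sum_{k\geq n}\varepsilon_k,
\]
where $\tau=\prod_{k\in\NN}(1+\eta_k)<\pinf$ by \cite[Theorem~3.7.3]{Knop56}.

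Combining the last two displays gives
\[
\|p_m-p_n\|_{W_m}^2\leq\bigg(\prod_{k=n}^{m-1}(1+\eta_k)\bigg)\xi_n^2-\xi_m^2
+\tau\sum_{k\geq n}\varepsilon_k .
\]
Letting $m\geq n\to\pinf$, the tail product $\prod_{k=n}^{m-1}(1+\eta_k)$ tends to $1$, both $\xi_n^2$ and $\xi_m^2$ tend to the common limit $\ell^2$, and $\sum_{k\geq n}\varepsilon_k\to 0$; hence the right-hand side tends to $0$. Since $\|p_m-p_n\|_{W_m}^2\geq\alpha\|p_m-p_n\|^2$, the sequence $(p_n)_{n\in\NN}$ is Cauchy for $\|\cdot\|$ and therefore converges strongly. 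I expect the decisive point to be this final cancellation: bounding $\xi_m^2$ crudely below by $0$ would leave a spurious limit $\ell^2$, so one must use that $(\xi_n^2)$ genuinely \emph{converges} and that the variable-metric product factor tends to $1$, which together force $\big(\prod_{k=n}^{m-1}(1+\eta_k)\big)\xi_n^2-\xi_m^2\to\ell^2-\ell^2=0$.
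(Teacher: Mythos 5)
Your proof is correct and follows essentially the same route as the paper's: both rest on the obtuse-angle characterization of $P_C^{W_m}$ applied at the point $p_n\in C$, the iteration of \eqref{e:2012-05-24} with the fixed target $z=p_n$ starting from $\|x_n-p_n\|_{W_n}^2=\xi_n^2$, and the convergence of $(\xi_n^2)_{n\in\NN}$ via Lemma~\ref{l:7} to force the cancellation. The only difference is cosmetic: you keep the factors $\prod_{k=n}^{m-1}(1+\eta_k)$ multiplicative and let them tend to $1$, whereas the paper converts the terms $\eta_k\|x_k-p_n\|_{W_k}^2$ into summable additive errors by bounding them uniformly through the nonexpansiveness of the projections --- your variant even spares that extra step.
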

\begin{proof} 
Set $(\forall n\in\NN)$ $z_n=P_C^{W_n}x_n$. For every 
$(m,n)\in\NN^2$, since $z_n\in C$ and 
$z_{m+n}=P_{C}^{W_{n+m}}x_{n+m}$, the well-known convex projection 
theorem \cite[Theorem~3.14]{Livre1} yields
\begin{equation}
\scal{z_n-z_{n+m}}{x_{n+m}-z_{n+m}}_{W_{n+m}}\leq 0,
\end{equation}
which implies that 
\begin{align}
\scal{z_n-x_{n+m}}{x_{n+m}-z_{n+m}}_{W_{n+m}} 
&=\scal{z_n-z_{n+m}}{x_{n+m}-z_{n+m}}_{W_{n+m}} 
-\|x_{n+m}-z_{n+m}\|^{2}_{W_{n+m}}\nonumber\\
&\leq-\|x_{n+m}-z_{n+m}\|^{2}_{W_{n+m}}.
\end{align}
Therefore, for every $(m,n)\in\NN^2$, 
\begin{align}
\label{e:shadow1}
\|z_n-z_{n+m}\|_{W_{n+m}}^2 
&=\|z_n-x_{n+m}\|_{W_{n+m}}^2
+2\scal{z_n-x_{n+m}}{x_{n+m}-z_{n+m}}_{W_{n+m}}\nonumber\\
&\quad\;+\|x_{n+m}-z_{n+m}\|_{W_{n+m}}^2\nonumber\\
&\leq\|z_n-x_{n+m}\|_{W_{n+m}}^2-
\|x_{n+m}-z_{n+m}\|^{2}_{W_{n+m}} .
\end{align}
Now fix $z\in C$, and set $\mu=\sup_{n\in\NN}\|W_n\|$ and
$\rho=\sup_{n\in\NN}\|x_n-z\|^2_{W_n}$. Then $\mu<\pinf$ and,
in view of Proposition~\ref{p:1}\ref{p:1i}, $\rho<\pinf$.
It follows from \eqref{e:2012-05-24} that, 
for every $n\in\NN$ and every $m\in\NN\smallsetminus\{0\}$,
since $P_C^{W_n}$ is nonexpansive with respect to $\|\cdot\|_{W_n}$
\cite[Proposition~4.8]{Livre1}, we have
\begin{align}
\label{e:2012-05-28a}
\|x_{n+m}-z_n\|^2_{W_{n+m}}
&\leq\|x_n-z_n\|^2_{W_n}+\sum_{k=n}^{n+m-1}\big(\eta_k
\|x_k-z_n\|^2_{W_k}+\varepsilon_k\big)\nonumber\\
&\leq\|x_n-z_n\|^2_{W_n}+\sum_{k=n}^{n+m-1}\Big(2\eta_k
\big(\|x_k-z\|^2_{W_k}+\|z_n-z\|^2_{W_k}\big)+
\varepsilon_k\Big)\nonumber\\
&\leq\|x_n-z_n\|^2_{W_n}+\sum_{k=n}^{n+m-1}\Big(2\eta_k\Big(\rho+
\frac{\mu}{\alpha}\|P_C^{W_n}x_n-P_C^{W_n}z\|^2_{W_n}\Big)+
\varepsilon_k\Big)\nonumber\\
&\leq\|x_n-z_n\|^2_{W_n}+\sum_{k=n}^{n+m-1}\Big(2\eta_k
\Big(\rho+\frac{\mu}{\alpha}\|x_n-z\|^2_{W_n}\Big)+
\varepsilon_k\Big)\nonumber\\
&\leq\|x_n-z_n\|^2_{W_n}+\sum_{k=n}^{n+m-1}\Big(2\rho\eta_k
\Big(1+\frac{\mu}{\alpha}\Big)+\varepsilon_k\Big).
\end{align}
Combining \eqref{e:shadow1} and \eqref{e:2012-05-28a}, we obtain
that for every $n\in\NN$ and every $m\in\NN\smallsetminus\{0\}$,
\begin{align}
\label{e:oitroi}
\alpha\|z_{n+m}-z_n\|^2
&\leq\|z_{n+m}-z_n\|_{W_{n+m}}^2\nonumber\\
&\leq\|x_n-z_n\|_{W_n}^2-\|x_{n+m}-z_{n+m}\|^{2}_{W_{n+m}}
+\sum_{k\geq n}\Big(2\rho\eta_k
\Big(1+\frac{\mu}{\alpha}\Big)+\varepsilon_k\Big).
\end{align}
On the other hand, \eqref{e:2012-05-24} yields
\begin{align}
(\forall n\in\NN)\quad\|x_{n+1}-z_{n+1}\|_{W_{n+1}}^2
&\leq\|x_{n+1}-z_{n}\|_{W_{n+1}}^2\nonumber\\
&\leq(1+\eta_n)\|x_n-z_n\|_{W_n}^2+\varepsilon_n,
\end{align}
which, by Lemma~\ref{l:7}, implies that 
$(\|x_n-z_n\|_{W_n})_{n\in\NN}$ converges.
Consequently, since $(\eta_k)_{k\in\NN}$ and 
$(\varepsilon_k)_{k\in\NN}$ are in $\ell_+^1(\NN)$, we derive 
from \eqref{e:oitroi} that  $(z_n)_{n\in\NN}$ is a Cauchy 
sequence and hence that it converges strongly.
\end{proof}

In the case of classical Fej\'er monotone sequences, it has been 
known since \cite{Raik69} that strong convergence is achieved 
when the interior of the target set is nonempty (see also
\cite[Proposition~3.10]{Else01} for the case of quasi-Fej\'er 
monotonicity). The following result extends this fact in the 
context of variable metric quasi-Fej\'er sequences.

\begin{proposition}
\label{p:2012-03-26}
Let $\alpha\in\RPP$, let $(\nu_n)_{n\in\NN}\in\ell_+^1(\NN)$, and
let $(W_n)_{n\in\NN}$ be a sequence in $\BP_{\alpha}(\HH)$
such that
\begin{equation}
\label{e:nangam}
\mu=\sup_{n\in\NN}\|W_n\|<\pinf\quad\text{and}\quad
(\forall n\in\NN)\quad(1+\nu_n)W_{n+1}\succcurlyeq W_n.
\end{equation}
Furthermore, let $C$ be a subset of $\HH$ such that 
$\inte C\neq\emp$, let $z\in C$ and $\rho\in\RPP$ be such that
$B(z;\rho)\subset C$, and let $(x_n)_{n\in\NN}$ be a sequence in 
$\HH$ such that 
\begin{multline}
\label{e:vmqf3}
\big(\exi(\varepsilon_n)_{n\in\NN}\in\ell_+^1(\NN)\big)
\big(\exi(\eta_n)_{n\in\NN}\in\ell_+^1(\NN)\big)
(\forall x\in B(z;\rho))(\forall n\in\NN)\\
\|x_{n+1}-x\|^2_{W_{n+1}}\leq
(1+\eta_n)\|x_n-x\|^2_{W_n}+\varepsilon_n.
\end{multline}
Then $(x_n)_{n\in\NN}$ converges strongly.
\end{proposition}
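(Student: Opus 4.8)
The plan is to prove that $(W_n(x_n-z))_{n\in\NN}$ is strongly Cauchy and then to recover the convergence of $(x_n)_{n\in\NN}$ by inverting the limiting metric. First I would record the preliminaries. Since \eqref{e:vmqf3} is a stationary $|\cdot|^2$-quasi-Fej\'er condition with respect to the target set $B(z;\rho)$, it implies \eqref{e:vmqf1}, so Proposition~\ref{p:1} applies with $\phi=|\cdot|^2$: by \ref{p:1ii} the sequence $(x_n)_{n\in\NN}$ is bounded, and by \ref{p:1i} the sequence $(\|x_n-x\|_{W_n})_{n\in\NN}$ converges for every $x\in B(z;\rho)$; in particular $(\|x_n-z\|_{W_n}^2)_{n\in\NN}$ converges, say to $\ell$. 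Moreover, because $(1+\nu_n)W_{n+1}\succcurlyeq W_n$ and $\mu=\sup_{n\in\NN}\|W_n\|<\pinf$, Lemma~\ref{l:ES175}\ref{l:ES175ii} provides an operator $W\in\BP_{\alpha}(\HH)$ with $W_n\to W$ pointwise, and $W$ is invertible by Lemma~\ref{l:kjMMXII}.

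The core step is an estimate on $w_{n,m}=W_n(x_n-z)-W_{n+m}(x_{n+m}-z)$. Using boundedness of $(x_n)_{n\in\NN}$, one bounds $\|x_k-x\|_{W_k}^2\leq\beta$ for a constant $\beta$, uniformly over $x\in B(z;\rho)$ and $k\in\NN$. Iterating \eqref{e:vmqf3} then telescopes to $\|x_{n+m}-x\|_{W_{n+m}}^2\leq\|x_n-x\|_{W_n}^2+\delta_n$ for all $x\in B(z;\rho)$, where $\delta_n=\beta\sum_{k\geq n}\eta_k+\sum_{k\geq n}\varepsilon_k\to 0$. Substituting $x=z+\rho u$ with $\|u\|\leq 1$, expanding both squared seminorms and rearranging, I would obtain $2\rho\scal{w_{n,m}}{u}\leq(\|x_n-z\|_{W_n}^2-\|x_{n+m}-z\|_{W_{n+m}}^2)+\rho^2(\|u\|_{W_n}^2-\|u\|_{W_{n+m}}^2)+\delta_n$. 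Taking the supremum over the closed unit ball (and bounding the $u$-dependent term uniformly, see below) then gives $2\rho\|w_{n,m}\|\leq r_{n,m}$, where $r_{n,m}=(\|x_n-z\|_{W_n}^2-\|x_{n+m}-z\|_{W_{n+m}}^2)+\mu\rho^2\theta_n+\delta_n$.

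The main obstacle is precisely the term $\rho^2(\|u\|_{W_n}^2-\|u\|_{W_{n+m}}^2)$, which is absent from the fixed-metric argument of \cite[Proposition~3.10]{Else01} and is what forces the monotonicity hypothesis on $(W_n)_{n\in\NN}$. Iterating $(1+\nu_k)W_{k+1}\succcurlyeq W_k$ yields $\|u\|_{W_n}^2\leq(\prod_{k=n}^{n+m-1}(1+\nu_k))\|u\|_{W_{n+m}}^2$, so this term is dominated by $\mu\rho^2\theta_n$ with $\theta_n=\prod_{k\geq n}(1+\nu_k)-1\to 0$, uniformly in $u$ and $m$. Combined with $\delta_n\to 0$ and with $\|x_n-z\|_{W_n}^2-\|x_{n+m}-z\|_{W_{n+m}}^2\to 0$ (both terms tending to $\ell$), this shows that $r_{n,m}\to 0$ as $n\to\pinf$ uniformly in $m$, whence $(W_n(x_n-z))_{n\in\NN}$ is strongly Cauchy with some limit $v$.

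It remains to recover $(x_n)_{n\in\NN}$. Setting $y=W^{-1}v$, I would write $W_n(x_n-z-y)=(W_n(x_n-z)-v)+(Wy-W_ny)$, whose two summands tend to $0$ by the Cauchy step just established and by pointwise convergence of $W_n$ at $y$, respectively. Finally, the coercivity inequality $\alpha\|h\|^2\leq\scal{W_nh}{h}\leq\|W_nh\|\,\|h\|$, valid since $W_n\in\BP_{\alpha}(\HH)$, gives $\alpha\|x_n-z-y\|\leq\|W_n(x_n-z-y)\|\to 0$. Hence $x_n\to z+y$, which is the asserted strong convergence.
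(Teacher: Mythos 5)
Your proposal is correct and follows essentially the same route as the paper's proof: absorb the factor $(1+\eta_n)$ into a summable error via the uniform bound on $\|x_n-x\|_{W_n}^2$ over $B(z;\rho)$, test the resulting inequality with points $z\pm\rho u$ of the ball to extract the linear term $\scal{W_n(x_n-z)-W_{n+m}(x_{n+m}-z)}{u}$, control the metric-variation term $\|u\|_{W_n}^2-\|u\|_{W_{n+m}}^2$ via the hypothesis $(1+\nu_n)W_{n+1}\succcurlyeq W_n$, conclude that $(W_n(x_n-z))_{n\in\NN}$ is strongly Cauchy, and invert the limit metric supplied by Lemma~\ref{l:ES175}\ref{l:ES175ii}. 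The only difference is bookkeeping: the paper tests with the single direction $u_n=v_n/\|v_n\|$ aligned with the one-step increment $v_n=W_{n+1}(x_{n+1}-z)-W_n(x_n-z)$ and deduces the slightly stronger conclusion $\sum_{n\in\NN}\|v_n\|<\pinf$, whereas you take a supremum over the unit ball for the two-index differences and obtain the Cauchy property directly.
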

\begin{proof}
We derive from \eqref{e:nangam} and 
Proposition~\ref{p:1}\ref{p:1ii} that 
\begin{equation}
\label{e:2012-03-29a}
\zeta=\sup_{x\in B(z;\rho)}\sup_{n\in\NN}\|x_n-x\|_{W_n}^2
\leq2\mu\bigg(\sup_{n\in\NN}
\|x_n-z\|^2+\sup_{x\in B(z;\rho)}\|x-z\|^2\bigg)
<\pinf.
\end{equation}
It follows from \eqref{e:vmqf3} and \eqref{e:2012-03-29a} that
\begin{equation}
\label{e:balay2012-03-13ff}
(\forall n\in\NN)(\forall x\in B(z;\rho))\quad
\|x_{n+1}-x\|^2_{W_{n+1}}\leq\|x_n-x\|_{W_n}^2+\xi_n,
\quad\text{where}\quad\xi_n=\zeta\eta_n+\varepsilon_n.
\end{equation}
Now set 
\begin{equation}
\label{e:hoaquasoin}
(\forall n\in\NN)\quad v_n=W_{n+1}(x_{n+1}-z)-W_n(x_n-z),
\end{equation}
and define a sequence $(z_n)_{n\in\NN}$ in $B(z;\rho)$ by
\begin{equation}
\label{e:2012-03-23}
(\forall n\in\NN)\quad z_n=z-\rho u_n,\quad\text{where}\quad u_n=
\begin{cases}
0,&\text{if}\;\;v_n=0;\\
\displaystyle{v_n}/{\|v_n\|},&\text{if}\;\;v_n\neq 0.
\end{cases}
\end{equation}
Then
\begin{equation}
\label{e:2012-03-24}
(\forall n\in\NN)\quad
\begin{cases}
\|x_{n+1}-z_n\|_{W_{n+1}}^2&=\|x_{n+1}-z\|_{W_{n+1}}^2+
2\rho\scal{W_{n+1}(x_{n+1}-z)}{u_n}\\
&\quad\;+\,\rho^2\|u_n\|_{W_{n+1}}^2;\\
\|x_n-z_n\|_{W_n}^2&=\|x_n-z\|_{W_n}^2+
2\rho\scal{W_n(x_n-z)}{u_n}+\rho^2\|u_n\|_{W_n}^2.
\end{cases}
\end{equation}
On the other hand, \eqref{e:balay2012-03-13ff} yields 
$(\forall n\in\NN)$ 
$\|x_{n+1}-z_n\|^2_{W_{n+1}}\leq\|x_n-z_n\|_{W_n}^2+\xi_n$.  
Therefore, it follows from \eqref{e:2012-03-24},
\eqref{e:hoaquasoin}, and \eqref{e:nangam} that
\begin{align}
\label{e:balay2012-03-13fg}
(\forall n\in\NN)\quad\|x_{n+1}-z\|^2_{W_{n+1}}
&\leq\|x_n-z\|_{W_n}^2-2\rho\|v_n\|+\rho^2
\big(\|u_n\|_{W_n}^{2}-\|u_n\|_{W_{n+1}}^{2}\big)+\xi_n
\nonumber\\
&\leq\|x_n-z\|_{W_n}^2-2\rho\|v_n\|+\rho^2\mu\nu_n+\xi_n.
\end{align}
Since $(\rho^2\mu\nu_n+\xi_n)_{n\in\NN}\in\ell_+^1(\NN)$, this
implies that 
\begin{equation}
\label{e:balay2012-03-13fi}
\sum_{n\in\NN}\|w_{n+1}-w_n\|=
\sum_{n\in\NN}\|v_n\|<\pinf,\quad\text{where}\quad
(\forall n\in\NN)\quad w_n=W_n(x_n-z). 
\end{equation}
Hence, $(w_n)_{n\in\NN}$ is a Cauchy sequence in $\HH$ and,
therefore, there exists $w\in\HH$ such that $w_n\to w$. On the
other hand, we deduce from \eqref{e:nangam} and
Lemma~\ref{l:ES175}\ref{l:ES175ii} that there exists 
$W\in\BP_{\alpha}(\HH)$ such that $W_n\to W$.
Now set $x=z+W^{-1}w$. Then, since $(W_n)_{n\in\NN}$ lies
in $\BP_\alpha(\HH)$, it follows from Cauchy-Schwarz that
\begin{equation}
\label{e:kjMMXII}
\alpha\|x_n-x\|
\leq\|W_nx_n-W_nx\|
=\|w_n-W_nW^{-1}w\|
\leq\|w_n-w\|+\|w-W_nW^{-1}w\|
\to 0,
\end{equation}
which concludes the proof.
\end{proof}

\section{Application to convex feasibility}
\label{sec:5}

We illustrate our results through an application to the convex 
feasibility problem, i.e., the generic problem of finding a
common point of a family of closed convex sets. As in
\cite{Moor01}, given $\alpha\in\RPP$ and $W\in\BP_{\alpha}(\HH)$, 
we say that an operator $T\colon\HH\to\HH$ with fixed point set 
$\Fix T$ belongs to $\mathfrak{T}(W)$ if
\begin{equation}
\label{e:md8}
(\forall x\in\HH)(\forall y\in\Fix T)\quad
\scal{y-Tx}{x-Tx}_W\leq 0.
\end{equation}
If $T\in\mathfrak{T}(W)$, then \cite[Proposition~2.3(ii)]{Else01}
yields
\begin{multline}
\label{e:md9}
(\forall x\in\HH)(\forall y\in\Fix T)(\forall\lambda\in[0,2])\quad
\|(\Id+\lambda(T-\Id))x-y\|_W^2\\
\leq\|x-y\|_W^2-\lambda(2-\lambda)\|Tx-x\|_W^2.
\end{multline}
The usefulness of the class $\mathfrak{T}(W)$ stems from the fact 
that it contains many of the operators commonly encountered in 
nonlinear analysis: firmly nonexpansive operators (in particular 
resolvents of maximally monotone operators and proximity 
operators of proper lower semicontinuous convex functions), 
subgradient projection operators, projection operators, 
averaged quasi-nonexpansive operators, and several combinations 
thereof \cite{Moor01,Kruk06,Else01}.

\begin{theorem}
\label{t:2}
Let $\alpha\in\RPP$, let $(C_i)_{i\in I}$ be a finite or 
countably infinite family of closed convex subsets of $\HH$ such 
that $C=\bigcap_{i\in I}C_i\neq\emp$, let 
$(a_n)_{n\in\NN}$ be a sequence in $\HH$ such that 
$\sum_{n\in\NN}\|a_n\|<\pinf$, 
let $(\eta_n)_{n\in\NN}$ be a sequence in 
$\ell_{+}^1(\NN)$, and let $(W_n)_{n\in\NN}$ be a sequence in 
$\BP_{\alpha}(\HH)$ such that  
\begin{equation}
\label{e:guad2012a}
\mu=\sup_{n\in\NN}\|W_n\|<\pinf
\quad\text{and}\quad
(\forall n\in\NN)\quad(1+\eta_n)W_n\succcurlyeq W_{n+1}.
\end{equation}
Let $\operatorname{i}\colon\NN\to I$ be such that 
\begin{equation}
\label{e:browder}
(\forall j\in I)(\exi M_j\in\NN\smallsetminus\{0\})(\forall n\in\NN)
\quad j\in\{\operatorname{i}(n),\ldots,\operatorname{i}(n+M_j-1)\}.
\end{equation}
For every $i\in I$, let $(T_{i,n})_{n\in\NN}$ be a sequence of 
operators such that 
\begin{equation}
\label{e:md7}
(\forall n\in\NN)\quad {T}_{i,n}\in\mathfrak{T}(W_n)\quad
\text{and}\quad\Fix{T}_{i,n}=C_i.
\end{equation}
Fix $\varepsilon\in\left]0,1\right[$ and $x_0\in\HH$, let 
$(\lambda_n)_{n\in\NN}$ be a sequence in 
$[\varepsilon,2-\varepsilon]$, and set
\begin{equation}
\label{e:guad2012b}
(\forall n\in\NN)\quad x_{n+1}=
x_n+\lambda_n\big(T_{\operatorname{i}(n),n}x_n+a_n-x_n\big).
\end{equation}
Suppose that, for every strictly increasing sequence
$(p_n)_{n\in\NN}$ in $\NN$, every $x\in\HH$, and every $j\in I$,
\begin{equation}
\label{e:mdX}
\begin{cases}
x_{p_n}\weakly x\\
T_{j,p_n}x_{p_n}-x_{p_n}\to 0\\
(\forall n\in\NN)\;\;j=\operatorname{i}(p_n)
\end{cases}
\quad\Rightarrow\quad x\in C_j.
\end{equation}
Then the following hold for some $\overline{x}\in C$.
\begin{enumerate}
\item 
\label{t:2i}
$x_n\weakly\overline{x}$.
\item 
\label{t:2ii}
Suppose that $\inte C\neq\emp$ and that there exists 
$(\nu_n)_{n\in\NN}\in\ell_{+}^1(\NN)$ such that $(\forall n\in\NN)$
$(1+\nu_n)W_{n+1} \succcurlyeq W_n$. Then $x_n\to\overline{x}$.
\item 
\label{t:2iii}
Suppose that $\varliminf d_C(x_n)=0$. Then $x_n\to\overline{x}$.
\item 
\label{t:2iv}
Suppose that there exists an index $j\in I$ of demicompact 
regularity: for every strictly increasing sequence 
$(p_n)_{n\in\NN}$ in $\NN$,
\begin{equation}
\label{e:j-pp}
\begin{cases}
\sup_{n\in\NN}\|x_{p_n}\|<\pinf\\
T_{j,{p_n}}x_{p_n}-x_{p_n}\to 0\\
(\forall n\in\NN)\;\;j=\operatorname{i}(p_n)
\end{cases}
\quad\Rightarrow\quad (x_{p_n})_{n\in\NN}\;\text{has a strong
sequential cluster point}.
\end{equation}
Then $x_n\to\overline{x}$.
\end{enumerate}
\end{theorem}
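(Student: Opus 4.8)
The plan is to reduce all four assertions to the variable metric quasi-Fej\'er machinery of Sections~\ref{sec:3}--\ref{sec:4}. Write $T_n=T_{\operatorname{i}(n),n}$, $y_{n+1}=x_n+\lambda_n(T_nx_n-x_n)$ (so that $x_{n+1}=y_{n+1}+\lambda_na_n$ by \eqref{e:guad2012b}), $\theta_n=\|T_nx_n-x_n\|_{W_n}^2$, and $\beta_n=2\sqrt{\mu}\,\|a_n\|$, so $(\beta_n)_{n\in\NN}\in\ell_+^1(\NN)$. Fix $z\in C$. Since $C\subset C_{\operatorname{i}(n)}=\Fix T_n$ by \eqref{e:md7}, $z$ is a fixed point of $T_n$, and applying \eqref{e:md9} to $T_n\in\mathfrak{T}(W_n)$ with relaxation $\lambda_n\in[\varepsilon,2-\varepsilon]$ gives $\|y_{n+1}-z\|_{W_n}^2\leq\|x_n-z\|_{W_n}^2-\lambda_n(2-\lambda_n)\theta_n$. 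Passing from $W_n$ to $W_{n+1}$ via \eqref{e:guad2012a}, and absorbing the perturbation through $\|x_{n+1}-z\|_{W_{n+1}}\leq\|y_{n+1}-z\|_{W_{n+1}}+\lambda_n\|a_n\|_{W_{n+1}}\leq\|y_{n+1}-z\|_{W_{n+1}}+\beta_n$ (using $\lambda_n\leq2$ and $\|a_n\|_{W_{n+1}}\leq\sqrt{\mu}\|a_n\|$), I square and fold the cross term into the \emph{multiplicative} factor by means of $2\|y_{n+1}-z\|_{W_{n+1}}\beta_n\leq\beta_n\|y_{n+1}-z\|_{W_{n+1}}^2+\beta_n$. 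This yields
\begin{equation}
\|x_{n+1}-z\|_{W_{n+1}}^2\leq(1+\eta_n')\|x_n-z\|_{W_n}^2
-\varepsilon(2-\varepsilon)\theta_n+\varepsilon_n',
\end{equation}
with $\eta_n'=\eta_n+\beta_n+\eta_n\beta_n$ and $\varepsilon_n'=\beta_n+\beta_n^2$ summable and, crucially, \emph{independent of $z$}.

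Dropping the $\theta_n$ term shows that \eqref{e:vmqf2}, hence \eqref{e:vmqf1}, holds with $\phi=|\cdot|^2$. Lemma~\ref{l:7} then gives convergence of $(\|x_n-z\|_{W_n})_{n\in\NN}$ for each $z\in C$ and boundedness of $(x_n)_{n\in\NN}$ (Proposition~\ref{p:1}). Summing the displayed inequality over $n$ and using this boundedness yields $\sum_{n\in\NN}\theta_n<\pinf$, so that $\|T_nx_n-x_n\|_{W_n}\to0$, whence $\|T_nx_n-x_n\|\to0$ (as $W_n\succcurlyeq\alpha\Id$) and, by \eqref{e:guad2012b}, $\|x_{n+1}-x_n\|\to0$. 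Finally, Lemma~\ref{l:ES175}\ref{l:ES175i} furnishes $W\in\BP_\alpha(\HH)$ with $W_n\to W$ pointwise.

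For \ref{t:2i} I verify the cluster-point hypothesis of Theorem~\ref{t:1}. Let $x$ be a weak sequential cluster point, say $x_{p_n}\weakly x$, and fix $j\in I$. By \eqref{e:browder}, each window $\{p_n,\dots,p_n+M_j-1\}$ contains an index $q_n$ with $\operatorname{i}(q_n)=j$; since $\|x_{k+1}-x_k\|\to0$ and $0\leq q_n-p_n<M_j$, a telescoping bound gives $x_{q_n}-x_{p_n}\to0$, so along a strictly increasing subsequence $x_{q_n}\weakly x$, while $T_{j,q_n}x_{q_n}-x_{q_n}\to0$ and $\operatorname{i}(q_n)=j$. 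Condition \eqref{e:mdX} then yields $x\in C_j$, and as $j$ is arbitrary $x\in C$. Theorem~\ref{t:1} gives $x_n\weakly\overline{x}$ for some $\overline{x}\in C$.

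The remaining parts localize this weak limit. For \ref{t:2ii}, the restriction of \eqref{e:vmqf2} to a ball $B(z;\rho)\subset C$ is exactly \eqref{e:vmqf3}, so Proposition~\ref{p:2012-03-26}---whose metric hypotheses are the standing ones together with the assumed estimate $(1+\nu_n)W_{n+1}\succcurlyeq W_n$---gives strong convergence, and the limit must be $\overline{x}$. For \ref{t:2iii}, $\phi=|\cdot|^2$ satisfies \eqref{e:quaso} with $\chi=2$, so Proposition~\ref{p:monodc} applies to the already-established \eqref{e:vmqf2} and, since $\varliminf d_C(x_n)=0$, yields $x_n\to\overline{x}$. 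For \ref{t:2iv}, list the (infinitely many, by \eqref{e:browder}) indices with $\operatorname{i}(p_n)=j$; along them $(x_{p_n})_{n\in\NN}$ is bounded and $T_{j,p_n}x_{p_n}-x_{p_n}\to0$, so \eqref{e:j-pp} produces a strong cluster point, necessarily equal to the weak limit $\overline{x}$; convergence of $(\|x_n-\overline{x}\|_{W_n})_{n\in\NN}$ then forces this limit to be $0$, and $\alpha\|x_n-\overline{x}\|^2\leq\|x_n-\overline{x}\|_{W_n}^2\to0$ gives $x_n\to\overline{x}$. The main obstacle is the first step: producing a \emph{stationary} estimate \eqref{e:vmqf2}, with additive error independent of $z$, despite the summable perturbation $(a_n)_{n\in\NN}$; the absorption $2\|y_{n+1}-z\|_{W_{n+1}}\beta_n\leq\beta_n\|y_{n+1}-z\|_{W_{n+1}}^2+\beta_n$ is precisely what confines the $z$-dependence to the harmless factor $1+\eta_n'$. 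The secondary subtlety is the index-relocation argument in \ref{t:2i}, which relies essentially on $\|x_{n+1}-x_n\|\to0$ to transport a weak cluster point onto a subsequence selecting the prescribed constraint $j$.
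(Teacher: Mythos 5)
Your proof is correct and follows the paper's overall strategy --- establish a stationary quasi-Fej\'er estimate, extract $T_{\operatorname{i}(n),n}x_n-x_n\to 0$, relocate indices using $\|x_{n+1}-x_n\|\to0$ and \eqref{e:browder}, then invoke Theorem~\ref{t:1}, Proposition~\ref{p:2012-03-26}, and Proposition~\ref{p:monodc} --- but the key first step is handled differently, in a way worth noting. The paper takes square roots: from $\|x_{n+1}-z\|_{W_{n+1}}\leq\|y_n-z\|_{W_{n+1}}+\lambda_n\|a_n\|_{W_{n+1}}$ it obtains \eqref{e:vmqf2} with $\phi=|\cdot|$ and additive error $2\sqrt{\mu}\,\|a_n\|$, which is independent of $z$ with no further work; the price is that part \ref{t:2ii} then requires the separate quadratic estimate \eqref{e:guad2012j}, whose cross term $4\sqrt{\mu(1+\eta_n)}\|x_n-x\|_{W_n}\|a_n\|$ is made uniformly summable only by bounding $\|x_n-x\|_{W_n}$ over $x\in B(z;\rho)$ by a constant $\zeta$. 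You instead stay with $\phi=|\cdot|^2$ and absorb the cross term via $2\beta_n\|y_{n+1}-z\|_{W_{n+1}}\leq\beta_n\|y_{n+1}-z\|_{W_{n+1}}^2+\beta_n$, which confines the $z$-dependence to the harmless multiplicative factor $(1+\beta_n)(1+\eta_n)$; this makes the quadratic inequality stationary from the outset, so \eqref{e:vmqf3} for part \ref{t:2ii} is literally its restriction to $B(z;\rho)\subset C$ and no second computation is needed. You also obtain $T_{\operatorname{i}(n),n}x_n-x_n\to0$ by summing the $\theta_n$ rather than by the paper's two-limit argument ($\|x_n-z\|_{W_n}\to\xi$ and $\|y_n-z\|_{W_{n+1}}\to\xi$), and in \ref{t:2iv} you conclude directly from the convergence of $(\|x_n-\overline{x}\|_{W_n})_{n\in\NN}$ rather than routing through \ref{t:2iii}; both variants are sound. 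The one point to tighten is the selection of the relocated indices $q_n$ in \ref{t:2i}: since consecutive windows $\{p_n,\dots,p_n+M_j-1\}$ may overlap, you must first thin $(p_n)_{n\in\NN}$ so that $p_n+M_j-1<p_{n+1}$ (as the paper does in \eqref{e:guad2012r}) to guarantee that $(q_n)_{n\in\NN}$ is strictly increasing --- a detail you gesture at but should state explicitly.
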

\begin{proof}
Fix $z\in C$ and set 
\begin{equation}
\label{e:guad2012m}
(\forall n\in\NN)\quad
y_n=x_n+\lambda_n\big(T_{\operatorname{i}(n),n}x_n -x_n\big).
\end{equation}
Appealing to \eqref{e:md9} and the fact that, by virtue of
\eqref{e:browder}, $z\in\bigcap_{i\in I}C_i=\bigcap_{n\in\NN}\Fix 
T_{\operatorname{i}(n),n}$, we obtain,
\begin{align}
(\forall n\in\NN)\quad
\|y_{n}-z\|_{W_n}^2 &\leq
\|x_n-z\|_{W_n}^2-\lambda_n(2-\lambda_n) 
\|T_{\operatorname{i}(n),n}x_n-x_{n}\|_{W_n}^2\nonumber\\
&\leq\|x_n-z\|_{W_n}^2-\varepsilon^2
\|T_{\operatorname{i}(n),n}x_n-x_{n}\|_{W_n}^2.
\end{align}
Moreover, it follows from \eqref{e:guad2012a} that
\begin{equation}
\label{e:guad2012d}
(\forall n\in\NN)\quad
\|y_{n}-z\|_{W_{n+1}}^2\leq (1+\eta_n)\|y_{n}-z\|_{W_n}^2 .
\end{equation}
Thus,
\begin{align}
(\forall n\in\NN)\quad
\|y_{n}-z\|_{W_{n+1}}^2&\leq(1+\eta_n)\|x_n-z\|_{W_n}^2
-\varepsilon^2 (1+\eta_n)\|T_{\operatorname{i}(n),n}x_n-x_{n}
\|_{W_n}^2\nonumber\\
&\leq (1+\eta_n)\|x_n-z\|_{W_n}^2
-\varepsilon^2\|T_{\operatorname{i}(n),n}x_n-x_{n}\|_{W_n}^2
\label{e:cyc}\\
&\leq (1+\eta_n)\|x_n-z\|_{W_n}^2.
\label{e:md11}
\end{align}
Using \eqref{e:guad2012b}, \eqref{e:guad2012m}, and
\eqref{e:md11},  we get
\begin{align}
\label{e:cycA}
(\forall n\in\NN)\quad
\|x_{n+1}-z\|_{W_{n+1}}
&\leq\|y_n-z\|_{W_{n+1}}+\lambda_{n}\|a_n\|_{W_{n+1}}\nonumber\\
&\leq\sqrt{1+\eta_n}\|x_n-z\|_{W_n}+\sqrt{\mu}\lambda_{n}\|a_n\|
\nonumber\\
&\leq (1+\eta_n)\|x_n-z\|_{W_n}+2\sqrt{\mu}\|a_n\|,
\end{align}
which shows that 
\begin{equation}
\label{e:2012-08-18-md2}
\text{$(x_n)_{n\in\NN}$ satisfies \eqref{e:vmqf2} --
and hence \eqref{e:vmqf1} -- with $\phi=|\cdot|$.}
\end{equation}
It follows from \eqref{e:2012-08-18-md2} and
Proposition~\ref{p:1}\ref{p:1i} that 
$(\|x_n-z\|_{W_n} )_{n\in\NN}$ converges, say 
\begin{equation}
\label{e:guad2012l}
\|x_n-z\|_{W_n}\to\xi\in\RR.
\end{equation}
We therefore derive from \eqref{e:cycA} that
$\|y_n-z\|_{W_{n+1}}\to\xi$ and then from \eqref{e:cyc} that 
\begin{equation}
\label{e:guad2012y}
\alpha\varepsilon^2\|T_{\operatorname{i}(n),n}x_n-x_n\|^2\leq
\varepsilon^2\|T_{\operatorname{i}(n),n}x_n-x_n\|_{W_n}^2
\leq (1+\eta_n)\|x_n-z\|_{W_n}^2 
-\|y_{n}-z\|_{W_{n+1}}^2\to 0.
\end{equation}

\ref{t:2i}:
It follows from \eqref{e:guad2012b} and \eqref{e:guad2012y} that
\begin{align}
\label{e:guad2012c}
\|x_{n+1}-x_n\|
&=\lambda_n\big\|T_{\operatorname{i}(n),n}x_n+a_n-x_n\big\|
\nonumber\\
&\leq 2\big(\|T_{\operatorname{i}(n),n}x_n-x_n\|+
\|a_n\|\big)\nonumber\\
&\leq 2\big(\|T_{\operatorname{i}(n),n}x_n-x_n\|_{W_n}/
\sqrt{\alpha}+\|a_n\|\big)\nonumber\\
&\to 0.
\end{align}
Now, fix $j\in I$ and let $x$ be a weak sequential cluster point 
of $(x_n)_{n\in\NN}$. According to \eqref{e:browder}, there exist 
strictly increasing sequences $(k_n)_{n\in\NN}$ and 
$(p_n)_{n\in\NN}$ in $\NN$ such that $x_{k_n}\weakly x$ and 
\begin{equation}
\label{e:guad2012r}
(\forall n\in\NN)\quad
\begin{cases}
k_n\leq p_n\leq k_n+M_j-1<k_{n+1}\leq p_{n+1},\\
j=\operatorname{i}(p_n). 
\end{cases}
\end{equation}
Therefore, we deduce from \eqref{e:guad2012c} that
\begin{align}
\|x_{p_n}-x_{k_n}\|&\leq\sum_{l=k_n}^{k_n+M_j-2}\|x_{l+1}-x_l\|
\nonumber\\
&\leq(M_j-1)\max_{k_n\leq l\leq k_n+M_j-2}\|x_{l+1}-x_l\|\nonumber\\
&\to 0,
\end{align}
which implies that $x_{p_n}\weakly x$. We also derive
from \eqref{e:guad2012y} and \eqref{e:guad2012r} that 
$T_{j,p_n}x_{p_n}-x_{p_n}=
T_{\operatorname{i}(p_n),p_n}x_{p_n}-x_{p_n}\to 0$. Altogether, 
it follows from \eqref{e:mdX} that $x\in C_j$. Since $j$ was 
arbitrarily chosen in $I$, we obtain $x\in C$ and, in view of 
Lemma~\ref{l:ES175}\ref{l:ES175i} and Theorem~\ref{t:1}, 
we conclude that $x_n\weakly x$.

\ref{t:2ii}: 
Suppose that $z\in\inte C$ and fix $\rho\in\RPP$ such that
$B(z;\rho)\subset C$. Set $\eta=\sup_{n\in\NN}\eta_n$,
$\zeta=\sup_{x\in B(z;\rho)}\sup_{n\in\NN}\|x_n-x\|_{W_n}$, and 
\begin{equation}
\label{e:md12}
(\forall n\in\NN)\quad\varepsilon_n=4\big(\zeta\sqrt{\mu(1+\eta)}
\|a_n\|+\mu\|a_n\|^2\big).
\end{equation}
Then $\eta<\pinf$ and, as in \eqref{e:2012-03-29a}, $\zeta<\pinf$. 
Therefore $(\varepsilon_n)_{n\in\NN}\in\ell_+^1(\NN)$.
Furthermore, we derive from \eqref{e:guad2012b}, 
\eqref{e:guad2012m}, and \eqref{e:md11} that, for every
$x\in B(z;\rho)$ and every $n\in\NN$, 
\begin{align}
\label{e:guad2012j}
\|x_{n+1}-x\|^2_{W_{n+1}}
&\leq\|y_n-x\|^2_{W_{n+1}}+2\lambda_n\|y_n-x\|_{W_{n+1}}\,
\|a_n\|_{W_{n+1}}+\lambda_n^2\|a_n\|^2_{W_{n+1}}\nonumber\\
&\leq(1+\eta_n)\|x_n-x\|^2_{W_n}+4\sqrt{\mu(1+\eta_n)}
\|x_n-x\|_{W_n}\,\|a_n\|+4\mu\|a_n\|^2\nonumber\\
&\leq(1+\eta_n)\|x_n-x\|^2_{W_n}+\varepsilon_n.
\end{align}
Altogether, the assertion follows from \ref{t:2i} and 
Proposition~\ref{p:2012-03-26}.

\ref{t:2iii}: This follows from \eqref{e:2012-08-18-md2},
Proposition~\ref{p:monodc}, and \ref{t:2i}.

\ref{t:2iv}: Let $j\in I$ be an index of demicompact regularity
and let $(p_n)_{n\in\NN}$ be a strictly increasing sequence such
that $(\forall n\in\NN)$ $j=\operatorname{i}(p_n)$. Then 
$(x_{p_n})_{n\in\NN}$ is bounded, while \eqref{e:guad2012y} 
asserts that $T_{j,{p_n}}x_{p_n}-x_{p_n}\to 0$. In turn, 
\eqref{e:j-pp} and \ref{t:2i} imply that 
$x_{p_n}\to\overline{x}\in C$. Therefore
$\varliminf d_C(x_n)\leq\|x_{p_n}-\overline{x}\|\to 0$ and 
\ref{t:2iii} yields the result.
\end{proof}

Condition \eqref{e:browder} first appeared in
\cite[Definition~5]{Bro67b}. Property \eqref{e:mdX} was 
introduced in \cite[Definition~3.7]{Baus96} and property 
\eqref{e:j-pp} in \cite[Definition~6.5]{Else01}. Examples 
of sequences of operators that satisfy \eqref{e:mdX} can 
be found in \cite{Baus96,Kruk06,Else01}. Here is a simple 
application of Theorem~\ref{t:2} to a variable metric 
periodic projection method.

\begin{corollary}
\label{c:2}
Let $\alpha\in\RPP$, let $m$ be a strictly positive integer, 
let $I=\{1,\ldots,m\}$, let $(C_i)_{i\in I}$ be family of 
closed convex subsets of $\HH$ such that 
$C=\bigcap_{i\in I}C_i\neq\emp$, let 
$(a_n)_{n\in\NN}$ be a sequence in $\HH$ such that 
$\sum_{n\in\NN}\|a_n\|<\pinf$,
let $(\eta_n)_{n\in\NN}$ be a sequence in 
$\ell_{+}^1(\NN)$, and let $(W_n)_{n\in\NN}$ be a sequence in 
$\BP_{\alpha}(\HH)$ such that  
$\sup_{n\in\NN}\|W_n\|<\pinf$ and 
$(\forall n\in\NN)$ $(1+\eta_n)W_n\succcurlyeq W_{n+1}$.
Fix $\varepsilon\in\left]0,1\right[$ and $x_0\in\HH$, let 
$(\lambda_n)_{n\in\NN}$ be a sequence in 
$[\varepsilon,2-\varepsilon]$, and set
\begin{equation}
\label{e:guad2012s}
(\forall n\in\NN)\quad 
x_{n+1}=x_n+\lambda_n\Big(
P_{C_{1+\operatorname{rem}(n,m)}}^{W_n}x_n+a_n-x_n\Big),
\end{equation}
where $\operatorname{rem}(\cdot,m)$ is the 
remainder function of the division by $m$. 
Then the following hold for some $\overline{x}\in C$.
\begin{enumerate}
\item 
\label{c:2i}
$x_n\weakly\overline{x}$.
\item 
\label{c:2ii}
Suppose that $\inte C\neq\emp$ and that there exists 
$(\nu_n)_{n\in\NN}\in\ell_{+}^1(\NN)$ such that $(\forall n\in\NN)$
$(1+\nu_n)W_{n+1} \succcurlyeq W_n$. Then $x_n\to\overline{x}$.
\item 
\label{c:2iii}
Suppose that there exists $j\in I$ such that $C_j$ is boundedly
compact, i.e., its intersection with every closed ball of $\HH$ is
compact. Then $x_n\to\overline{x}$.
\end{enumerate}
\end{corollary}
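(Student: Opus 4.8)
The plan is to read Corollary~\ref{c:2} as the specialization of Theorem~\ref{t:2} in which the control index is the periodic map $\operatorname{i}(n)=1+\operatorname{rem}(n,m)$ and, for each $i\in I$, the operators are the variable-metric projectors $T_{i,n}=P_{C_i}^{W_n}$, so that the recursion \eqref{e:guad2012s} is an instance of \eqref{e:guad2012b}. All the hypotheses on $(W_n)_{n\in\NN}$, $(a_n)_{n\in\NN}$, $(\eta_n)_{n\in\NN}$, $(\lambda_n)_{n\in\NN}$, and $(C_i)_{i\in I}$ are inherited verbatim, so the work consists in checking the three structural conditions \eqref{e:browder}, \eqref{e:md7}, and \eqref{e:mdX}; once these hold, \ref{c:2i} and \ref{c:2ii} follow immediately from \ref{t:2i} and \ref{t:2ii}, while \ref{c:2iii} requires one short compactness argument feeding \ref{t:2iv}.

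First I would verify the cyclic control condition \eqref{e:browder}: since $n\mapsto 1+\operatorname{rem}(n,m)$ sweeps $I=\{1,\dots,m\}$ periodically with period $m$, any block $\{\operatorname{i}(n),\dots,\operatorname{i}(n+m-1)\}$ exhausts $I$, so \eqref{e:browder} holds with $M_j=m$ for every $j$. Next I would check \eqref{e:md7}: because $\alpha>0$, each $W_n$ induces a genuine inner product, and the variational characterization of the projection in that metric \cite[Theorem~3.14]{Livre1} says that $P_{C_i}^{W_n}x$ is the unique point of $C_i$ with $\scal{y-P_{C_i}^{W_n}x}{x-P_{C_i}^{W_n}x}_{W_n}\leq 0$ for all $y\in C_i$; this is exactly \eqref{e:md8}, and $\Fix P_{C_i}^{W_n}=C_i$ by definition, so $P_{C_i}^{W_n}\in\mathfrak{T}(W_n)$ with the prescribed fixed point set.

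The one nontrivial hypothesis is the demiclosedness-type property \eqref{e:mdX}. I would argue as follows: given a strictly increasing $(p_n)_{n\in\NN}$ with $x_{p_n}\weakly x$, $P_{C_j}^{W_{p_n}}x_{p_n}-x_{p_n}\to 0$, and $j=\operatorname{i}(p_n)$ for all $n$, the membership $P_{C_j}^{W_{p_n}}x_{p_n}\in C_j$ gives $d_{C_j}(x_{p_n})\leq\|x_{p_n}-P_{C_j}^{W_{p_n}}x_{p_n}\|\to 0$. Since $C_j$ is nonempty closed convex, $d_{C_j}$ is convex and continuous, hence weakly lower semicontinuous, so $d_{C_j}(x)\leq\varliminf d_{C_j}(x_{p_n})=0$ and therefore $x\in C_j$. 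This establishes \eqref{e:mdX}. With \eqref{e:browder}, \eqref{e:md7}, and \eqref{e:mdX} all in force, Theorem~\ref{t:2}\ref{t:2i} produces $\overline{x}\in C$ with $x_n\weakly\overline{x}$, which is \ref{c:2i}, and Theorem~\ref{t:2}\ref{t:2ii} yields \ref{c:2ii} under the additional condition $(1+\nu_n)W_{n+1}\succcurlyeq W_n$.

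Finally, for \ref{c:2iii} I would show that bounded compactness of some $C_j$ implies the demicompact regularity \eqref{e:j-pp}, and then invoke Theorem~\ref{t:2}\ref{t:2iv}. Let $(p_n)_{n\in\NN}$ be strictly increasing with $\sup_n\|x_{p_n}\|<\pinf$, $P_{C_j}^{W_{p_n}}x_{p_n}-x_{p_n}\to 0$, and $j=\operatorname{i}(p_n)$; set $q_n=P_{C_j}^{W_{p_n}}x_{p_n}\in C_j$. Then $(q_n)_{n\in\NN}$ is bounded, so it lies in $C_j\cap B(0;R)$ for some $R\in\RPP$, a compact set by bounded compactness; hence some subsequence $q_{k_n}\to q\in C_j$ converges strongly, and $x_{p_{k_n}}=q_{k_n}+(x_{p_{k_n}}-q_{k_n})\to q$ exhibits a strong sequential cluster point of $(x_{p_n})_{n\in\NN}$, confirming \eqref{e:j-pp}. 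Theorem~\ref{t:2}\ref{t:2iv} then gives $x_n\to\overline{x}$. I do not expect a serious obstacle anywhere: the corollary is essentially a dictionary entry translating Theorem~\ref{t:2} into the projection setting, and the only steps demanding attention are the elementary passage $P_{C_j}^{W_{p_n}}x_{p_n}-x_{p_n}\to 0\Rightarrow d_{C_j}(x_{p_n})\to 0$ combined with the weak lower semicontinuity of $d_{C_j}$, and the compactness extraction just described.
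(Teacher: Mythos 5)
Your proposal is correct and follows the same route as the paper: specialize Theorem~\ref{t:2} with $\operatorname{i}(n)=1+\operatorname{rem}(n,m)$ and $T_{i,n}=P_{C_i}^{W_n}$, verify \eqref{e:browder}, \eqref{e:md7}, \eqref{e:mdX}, and demicompact regularity. The only (immaterial) micro-differences are that the paper deduces \eqref{e:mdX} from weak closedness of $C_j$ applied to $P_{C_j}^{W_{p_n}}x_{p_n}\weakly x$ rather than via weak lower semicontinuity of $d_{C_j}$, and for \ref{c:2iii} it extracts the cluster point from $(P_{C_j}x_{p_n})_{n\in\NN}$ after the comparison $\|P_{C_j}x_{p_n}-x_{p_n}\|\leq\|P_{C_j}^{W_{p_n}}x_{p_n}-x_{p_n}\|$, whereas you work directly with the variable-metric projections.
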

\begin{proof}
The function $\operatorname{i}\colon\NN\to I\colon n\mapsto
1+\operatorname{rem}(n,m)$ satisfies \eqref{e:browder}
with $(\forall j\in I)$ $M_j=m$. Now, 
set $(\forall i\in I)(\forall n\in\NN)$ $T_{i,n}=P_{C_i}^{W_n}$. 
Then $(\forall i\in I)(\forall n\in\NN)$
$T_{i,n}\in\mathfrak{T}(W_n)$ and $\Fix T_{i,n}=C_i$. Hence, 
\eqref{e:guad2012s} is a special case of \eqref{e:guad2012b}. 

\ref{c:2i}--\ref{c:2ii}:
Fix $j\in I$ and let $(x_{p_n})_{n\in\NN}$ be a weakly
convergent subsequence of $(x_n)_{n\in\NN}$, say 
$x_{p_n}\weakly x$, such that
$T_{j,p_n}x_{p_n}-x_{p_n}\to 0$ and $(\forall n\in\NN)$ 
$j=\operatorname{i}(p_n)$. Then 
$C_j\ni P_{C_j}^{W_{p_n}}x_{p_n}=T_{j,p_n}x_{p_n}\weakly x$ and,
since $C_j$ is weakly closed \cite[Theorem~3.32]{Livre1}, we have
$x\in C_j$. This shows that \eqref{e:mdX} holds. Altogether, 
the claims follow from Theorem~\ref{t:2}\ref{t:2i}--\ref{t:2ii}.

\ref{c:2iii}: Let $(p_n)_{n\in\NN}$ be a strictly increasing
sequence in $\NN$ such that 
$P^{W_{p_n}}_{C_j}x_{p_n}-x_{p_n}=T_{j,p_n}x_{p_n}-x_{p_n}\to 0$
and $(\forall n\in\NN)$ $j=\operatorname{i}(p_n)$. 
Then 
\begin{equation}
\label{e:athens2012-08-22x}
\|P_{C_j}x_{p_n}-x_{p_n}\|\leq
\|P^{W_{p_n}}_{C_j}x_{p_n}-x_{p_n}\|\to 0.
\end{equation}
On the other hand, since $(x_{p_n})_{n\in\NN}$ is bounded and 
$P_{C_j}$ is nonexpansive, $(P_{C_j}x_{p_n})_{n\in\NN}$ is 
a bounded sequence in the boundedly compact set $C_j$. Hence, 
$(P_{C_j}x_{p_n})_{n\in\NN}$ admits a strong sequential 
cluster point and so does $(x_{p_n})_{n\in\NN}$ since 
$P_{C_j}x_{p_n}-x_{p_n}\to 0$. Thus, $j\in I$ is an index of 
demicompact regularity and the claim therefore follows from 
Theorem~\ref{t:2}\ref{t:2iv}.
\end{proof}

\begin{remark}
In the special case when, for every $n\in\NN$, $W_n=\Id$ and
$\eta_n=0$, Corollary~\ref{c:2}\ref{c:2i} was established in 
\cite{Breg65} (with $(\forall n\in\NN)$ $\lambda_n=1)$, 
and Corollary~\ref{c:2}\ref{c:2ii} in \cite{Gubi67}.
\end{remark}

Next is an application of Corollary~\ref{c:2} to the problem 
of solving linear inequalities. In Euclidean spaces, the use 
of periodic projection methods to solve this problem goes back 
to \cite{Motz54}.

\begin{example}
\label{ex:2012-08-19-md}
Let $\alpha\in\RPP$, let $m$ be a strictly positive integer, 
let $I=\{1,\ldots,m\}$, let $(\eta_i)_{i\in I}$ be real numbers, 
and suppose that $(u_i)_{i\in I}$ are nonzero vectors in $\HH$ 
such that
\begin{equation}
\label{e:2012-08-19-md1}
C=\menge{x\in\HH}{(\forall i\in I)\;\;
\scal{x}{u_i}\leq\eta_i}\neq\emp.
\end{equation}
Let $(\eta_n)_{n\in\NN}$ be a sequence in 
$\ell_{+}^1(\NN)$, and let $(W_n)_{n\in\NN}$ be a sequence in 
$\BP_{\alpha}(\HH)$ such that $\sup_{n\in\NN}\|W_n\|<\pinf$ and 
$(\forall n\in\NN)$ $(1+\eta_n)W_n\succcurlyeq W_{n+1}$.
Fix $\varepsilon\in\left]0,1\right[$ and $x_0\in\HH$, let 
$(\lambda_n)_{n\in\NN}$ be a sequence in 
$[\varepsilon,2-\varepsilon]$, and set
\begin{equation}
\label{e:2012-08-19-md2}
(\forall n\in\NN)\quad 
\left\lfloor
\begin{array}{l}
\operatorname{i}(n)=1+\operatorname{rem}(n,m)\\
\text{if}\;\;\scal{x_n}{u_{\operatorname{i}(n)}}\leq
\eta_{\operatorname{i}(n)}\\
\left\lfloor
\begin{array}{l}
y_n=x_n\\
\end{array}
\right.\\[1mm]
\text{if}\;\;\scal{x_n}{u_{\operatorname{i}(n)}}>
\eta_{\operatorname{i}(n)}\\
\left\lfloor
\begin{array}{l}
y_n=x_n+\displaystyle{\frac{\eta_{\operatorname{i}(n)}-
\scal{x_n}{u_{\operatorname{i}(n)}}}
{\scal{u_{\operatorname{i}(n)}}{W_n^{-1}u_{\operatorname{i}(n)}}}
W_n^{-1}u_{\operatorname{i}(n)}}
\end{array}
\right.\\[1mm]
x_{n+1}=x_n+\lambda_n(y_n-x_n).
\end{array}
\right.\\[1mm]
\end{equation}
Then there exists $\overline{x}\in C$ such that
$x_n\weakly\overline{x}$.
\end{example}
\begin{proof}
Set $(\forall i\in I)$ 
$C_i=\menge{x\in\HH}{\scal{x}{u_i}\leq\eta_i}$. Then it follows 
from \cite[Example~28.16(iii)]{Livre1} that
\eqref{e:2012-08-19-md2} can be rewritten as 
\begin{equation}
\label{e:2012-08-19-md3}
(\forall n\in\NN)\quad 
x_{n+1}=x_n+\lambda_n\Big(
P_{C_{1+\operatorname{rem}(n,m)}}^{W_n}x_n-x_n\Big).
\end{equation}
The claim is therefore a consequence of 
Corollary~\ref{c:2}\ref{c:2i}.
\end{proof}

We now turn our attention to the problem of finding a zero of 
a maximally monotone operator $A\colon\HH\to 2^{\HH}$ 
(see \cite{Livre1} for background) via 
a variable metric proximal point algorithm. Let $\alpha\in\RPP$, 
let $\gamma\in\RPP$, let $W\in\BP_{\alpha}(\HH)$, and let 
$A\colon\HH\to 2^{\HH}$ be maximally monotone with graph
$\gra A$. It follows from 
\cite[Corollary~3.14(ii)]{Sico03} (applied with $f\colon
x\mapsto\scal{Wx}{x}/2$) that
\begin{equation}
\label{e:md24}
J_{\gamma A}^W\colon\HH\to\HH\colon x\mapsto(W+\gamma A)^{-1}(Wx)
\end{equation}
is well-defined, and that
\begin{equation}
\label{e:md28}
J_{\gamma A}^W\in\mathfrak{T}(W)\quad\text{and}\quad\Fix 
J_{\gamma A}^W=\menge{z\in\HH}{0\in Az}.
\end{equation}
We write $J_{\gamma A}^{\Id}=J_{\gamma A}$.  

\begin{corollary}
\label{c:3}
Let $\alpha\in\RPP$, let $A\colon\HH\to 2^{\HH}$ be a maximally 
monotone operator such that $C=\menge{z\in\HH}{0\in Az}\neq\emp$, 
let $(a_n)_{n\in\NN}$ be a sequence in $\HH$ such that 
$\sum_{n\in\NN}\|a_n\|<\pinf$, let $(\eta_n)_{n\in\NN}$ be a 
sequence in $\ell_{+}^1(\NN)$, and 
let $(W_n)_{n\in\NN}$ be a sequence in $\BP_{\alpha}(\HH)$ such 
that $\mu=\sup_{n\in\NN}\|W_n\|<\pinf$ and $(\forall n\in\NN)$ 
$(1+\eta_n)W_n\succcurlyeq W_{n+1}$.
Fix $\varepsilon\in\left]0,1\right[$ and $x_0\in\HH$, let 
$(\lambda_n)_{n\in\NN}$ be a sequence in 
$[\varepsilon,2-\varepsilon]$, let 
$(\gamma_n)_{n\in\NN}$ be a sequence in 
$\left[\varepsilon,\pinf\right[$, and set
\begin{equation}
\label{e:md08}
(\forall n\in\NN)\quad 
x_{n+1}=x_n+\lambda_n\Big(J^{W_n}_{\gamma_n A}x_n+a_n-x_n\Big).
\end{equation}
Then the following hold for some $\overline{x}\in C$.
\begin{enumerate}
\item 
\label{c:3i}
$x_n\weakly\overline{x}$.
\item 
\label{c:3ii}
Suppose that $\inte C\neq\emp$ and that there exists 
$(\nu_n)_{n\in\NN}\in\ell_{+}^1(\NN)$ such that $(\forall n\in\NN)$
$(1+\nu_n)W_{n+1} \succcurlyeq W_n$. Then $x_n\to\overline{x}$.
\item 
\label{c:3iii}
Suppose that $A$ is pointwise uniformly monotone 
on $C$, i.e., for every $x\in C$ there exists an increasing function 
$\phi\colon\RP\to\RPX$ vanishing only at $0$ such that 
\begin{equation}
\label{e:Bunifmon}
(\forall u\in Ax)(\forall (y,v)\in\gra A)
\:\;\scal{x-y}{u-v}\geq\phi(\|x-y\|).
\end{equation}
Then $x_n\to\overline{x}$. 
\end{enumerate}
\end{corollary}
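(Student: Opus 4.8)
The plan is to verify the hypotheses of Theorem~\ref{t:2} in this single-set situation (with $I$ a singleton, so condition \eqref{e:browder} holds trivially with $M_j=1$) and to harness the variable metric resolvents $J^{W_n}_{\gamma_n A}$ as the operators $T_{i,n}$. First I would set $(\forall n\in\NN)$ $T_n=J^{W_n}_{\gamma_n A}$. By \eqref{e:md28} we have $T_n\in\mathfrak{T}(W_n)$ and $\Fix T_n=\{z\in\HH\mid 0\in Az\}=C$, so \eqref{e:md7} is satisfied. The iteration \eqref{e:md08} is then exactly \eqref{e:guad2012b}. Thus parts \ref{c:3i} and \ref{c:3ii} will follow from Theorem~\ref{t:2}\ref{t:2i}\&\ref{t:2ii} as soon as I check the demiclosedness-type condition \eqref{e:mdX}.

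The verification of \eqref{e:mdX} is the crux. Suppose $x_{p_n}\weakly x$ and $T_{p_n}x_{p_n}-x_{p_n}\to 0$; I must show $0\in Ax$. The standard technique is to unwind the resolvent: writing $y_n=T_{p_n}x_{p_n}=(W_{p_n}+\gamma_{p_n}A)^{-1}(W_{p_n}x_{p_n})$, I get $W_{p_n}x_{p_n}\in W_{p_n}y_n+\gamma_{p_n}Ay_n$, hence $\gamma_{p_n}^{-1}W_{p_n}(x_{p_n}-y_n)\in Ay_n$. Since $\gamma_{p_n}\geq\varepsilon$, since $\sup_n\|W_{p_n}\|=\mu<\pinf$, and since $x_{p_n}-y_n\to 0$, the right-hand side tends strongly to $0$; meanwhile $y_n=x_{p_n}+(y_n-x_{p_n})\weakly x$. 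Now I invoke the fact that the graph of a maximally monotone operator is sequentially closed in the weak-strong topology (see \cite[Proposition~20.33(ii)]{Livre1}): from $(y_n,\,\gamma_{p_n}^{-1}W_{p_n}(x_{p_n}-y_n))\in\gra A$ with $y_n\weakly x$ and $\gamma_{p_n}^{-1}W_{p_n}(x_{p_n}-y_n)\to 0$, I conclude $(x,0)\in\gra A$, i.e.\ $0\in Ax$, so $x\in C$. This establishes \eqref{e:mdX} and yields \ref{c:3i} and \ref{c:3ii}.

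For part \ref{c:3iii} the plan is to combine weak convergence with uniform monotonicity to upgrade to strong convergence. From \ref{c:3i} I already have $x_n\weakly\overline{x}\in C$; I also have $T_nx_n-x_n\to 0$ from \eqref{e:guad2012y}, and running the same resolvent computation along the full sequence gives $v_n:=\gamma_n^{-1}W_n(x_n-y_n)\in Ay_n$ with $v_n\to 0$ and $y_n\weakly\overline{x}$ (using again $\|x_n-y_n\|\leq\|T_nx_n-x_n\|+\|a_n\|\to 0$, and that the $a_n$ contribution is summable hence null). Since $\overline{x}\in C$, pick $u\in A\overline{x}$ and apply \eqref{e:Bunifmon} with the pair $(y_n,v_n)\in\gra A$: $\phi(\|\overline{x}-y_n\|)\leq\scal{\overline{x}-y_n}{u-v_n}$. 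The right-hand side tends to $0$ because $\overline{x}-y_n\weakly 0$ and $u-v_n\to u$ (so $\scal{\overline{x}-y_n}{u}\to 0$ by weak convergence and $\scal{\overline{x}-y_n}{v_n}\to 0$ since $v_n\to 0$ and $(y_n)$ is bounded). Hence $\phi(\|\overline{x}-y_n\|)\to 0$; because $\phi$ is increasing and vanishes only at $0$, this forces $\|\overline{x}-y_n\|\to 0$, and then $\|\overline{x}-x_n\|\leq\|\overline{x}-y_n\|+\|y_n-x_n\|\to 0$.

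The main obstacle is the bookkeeping in part \ref{c:3iii}: one must be careful that $\phi(\|\overline{x}-y_n\|)\to 0$ genuinely implies $\|\overline{x}-y_n\|\to 0$. Since $\phi$ is only assumed increasing (not strictly) and $\RPX$-valued, I would argue by contradiction: if $\|\overline{x}-y_n\|\not\to 0$, a subsequence stays bounded below by some $\delta>0$, whence $\phi(\|\overline{x}-y_n\|)\geq\phi(\delta)>0$ along that subsequence (as $\phi$ vanishes \emph{only} at $0$), contradicting $\phi(\|\overline{x}-y_n\|)\to 0$. The other delicate point is confirming that the perturbations $a_n$ do not spoil the identity $v_n\in Ay_n$; this is handled by defining $y_n$ as the exact resolvent value $J^{W_n}_{\gamma_n A}x_n$ (the $a_n$-free iterate, matching \eqref{e:guad2012m}) rather than the perturbed update, and noting $\|x_n-y_n\|\to 0$ regardless.
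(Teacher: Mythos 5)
Your proposal is correct and follows essentially the same route as the paper: reduce to Theorem~\ref{t:2} with $I=\{1\}$ and $T_{1,n}=J^{W_n}_{\gamma_n A}$ via \eqref{e:md28}, verify \eqref{e:mdX} by unwinding the resolvent to get $v_n=\gamma_n^{-1}W_n(x_n-y_n)\in Ay_n$ with $\|v_n\|\leq\mu\varepsilon^{-1}\|x_n-y_n\|\to 0$ and invoking weak--strong closedness of $\gra A$, and apply \eqref{e:Bunifmon} to the pairs $(y_n,v_n)$ for part \ref{c:3iii}. The only (harmless) deviations are that you conclude \ref{c:3iii} directly from $\|y_n-\overline{x}\|\to 0$ and $\|y_n-x_n\|\to 0$ rather than passing through $\varliminf d_C(x_n)=0$ and Theorem~\ref{t:2}\ref{t:2iii} as the paper does, and that you work with a general $u\in A\overline{x}$ where the paper simply takes $u=0$.
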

\begin{proof}
In view of \eqref{e:md28}, \eqref{e:md08} is a special case of 
\eqref{e:guad2012b} with $I=\{1\}$ and $(\forall n\in\NN)$ 
$T_{1,n}=J^{W_n}_{\gamma_n A}$. 
Hence, using Theorem~\ref{t:2}\ref{t:2i}--\ref{t:2ii}, 
to show \ref{c:3i}--\ref{c:3ii}, it suffices to prove that 
\eqref{e:mdX} holds. To this end, let $(x_{p_n})_{n\in\NN}$ be 
a weakly convergent subsequence of $(x_n)_{n\in\NN}$, 
say $x_{p_n}\weakly x$, such that
$J^{W_{p_n}}_{\gamma_{p_n}A}\:x_{p_n}-x_{p_n}\to 0$. To show that
$0\in Ax$, let us set
\begin{equation}
(\forall n\in\NN)\quad 
y_n=J^{W_n}_{\gamma_n A}\:x_n
\quad\text{and}\quad 
v_n=\frac{1}{\gamma_n}W_n(x_n-y_n).
\end{equation}
Then \eqref{e:md24} yields $(\forall n\in\NN)$ $v_n\in Ay_n$. On
the other hand, since $y_{p_n}-x_{p_n}\to 0$, we have
\begin{equation}
\|v_{p_n}\|=\frac{\|W_{p_n}(x_{p_n}-y_{p_n})\|}{\gamma_{p_n}}
\leq\frac{\mu}{\varepsilon}\|x_{p_n}-y_{p_n}\|\to 0.
\end{equation}
Thus, $y_{p_n}\weakly x$ and $Ay_{p_n}\ni v_{p_n}\to 0$. Since 
$\gra A$ is sequentially closed in 
$\HH^\text{weak}\times\HH^\text{strong}$
\cite[Proposition~20.33(ii)]{Livre1}, we conclude that $0\in Ax$.
Let us now show \ref{c:3iii}. We have $0\in A\overline{x}$ and 
$(\forall n\in\NN)$ $v_{p_n}\in Ay_{p_n}$. Hence, it follows from 
\eqref{e:Bunifmon} that there exists an increasing function 
$\phi\colon\RP\to\RPX$ vanishing only at $0$ such that 
\begin{equation}
\label{e:2012-08-18-md1}
(\forall n\in\NN)\quad\scal{y_{p_n}-\overline{x}}{v_{p_n}}\geq
\phi(\|y_{p_n}-\overline{x}\|).
\end{equation}
Since $v_{p_n}\to 0$, we get
$\phi(\|y_{p_n}-\overline{x}\|)\to 0$ and, in turn, 
$\|y_{p_n}-\overline{x}\|\to 0$. It follows that
$\|x_{p_n}-\overline{x}\|\to 0$ and hence that
$\varliminf d_C(x_n)=0$. In view of Theorem~\ref{t:2}\ref{t:2iii}, 
we conclude that $x_n\to\overline{x}$.
\end{proof}

\begin{remark}
Corollary~\ref{c:3}\ref{c:3i} reduces to the classical result 
of \cite[Theorem~1]{Rock76} when $(\forall n\in\NN)$
$W_n=\Id$, $\eta_n=0$, and $\lambda_n=1$. In this context,
Corollary~\ref{c:3}\ref{c:3ii} appears in 
\cite[Section~6]{Neva79}. In a finite-dimensional setting,
an alternative variable metric proximal point algorithm 
is proposed in \cite{Pare08}, which also uses 
the above conditions on $(W_n)_{n\in\NN}$ but alternative
error terms and relaxation parameters.
\end{remark}

\section{Application to inverse problems}
\label{sec:6}

In this section, we consider an application to a structured 
variational inverse problem. Henceforth, $\Gamma_0(\HH)$ denotes
the class of proper lower semicontinuous convex functions from 
$\HH$ to $\RX$.

\begin{problem}
\label{prob:1}
Let $f\in\Gamma_0(\HH)$ and let $I$ be a nonempty finite index set.
For every $i\in I$, let $(\GG_i,\|\cdot\|_i)$ be a real Hilbert 
space, let $L_i\colon\HH\to\GG_i$ be a nonzero bounded linear 
operator, let $r_i\in\GG_i$, and let $\mu_i\in\RPP$. 
The problem is to 
\begin{equation}
\label{e:2012-08-16b}
\minimize{x\in\HH}f(x)+\frac12\sum_{i\in I}\mu_i\|L_ix-r_i\|_i^2.
\end{equation}
\end{problem}

This formulation covers many inverse problems 
(see \cite[Section~5]{Smms05} and the references therein) and 
it can be interpreted as follows:
an ideal object $\widetilde{x}\in\HH$ is to be recovered from 
noisy linear measurements $r_i=L_i\widetilde{x}+w_i\in\GG_i$, 
where $w_i$ represents noise ($i\in I$), and the function $f$
penalizes the violation of prior information on $\widetilde{x}$.
Thus, \eqref{e:2012-08-16b} attempts to strike a balance between 
the observation model, represented by the data fitting term 
$x\mapsto(1/2)\sum_{i\in I}\mu_i\|L_ix-r_i\|_i^2$, and 
a priori knowledge, represented by $f$. To solve this
problem within our framework, we require the following facts.

Let $\alpha\in\RPP$, let $W\in\BP_{\alpha}(\HH)$, and let 
$\varphi\in\Gamma_0(\HH)$. The proximity operator of $\varphi$ 
relative to the metric induced by $W$ is 
\begin{equation}
\prox^{W}_\varphi\colon\HH\to\HH\colon x
\mapsto\underset{y\in\HH}{\argmin}\bigg(\varphi(y)+
\frac12\|x-y\|_{W}^2\bigg).
\end{equation}
Now, let $\partial\varphi$ be the subdifferential of $\varphi$
\cite[Chapter~16]{Livre1}. Then, in connection with \eqref{e:md24}, 
$\partial\varphi$ is maximally monotone and we have
\cite[Section~3.3]{Varm12}
\begin{equation}
\label{e:prox3}
(\forall\gamma\in\RPP)\quad
\prox^{W}_{\gamma\varphi}=J^{W}_{\gamma\partial\varphi}=
(W+\gamma\partial\varphi)^{-1}\circ W.
\end{equation}
We write $\prox^{\Id}_{\gamma\varphi}=\prox_{\gamma\varphi}$.

\begin{lemma}
\label{l:2012-08-16}
Let $A\colon\HH\to 2^{\HH}$ be maximally monotone,
let $U$ be a nonzero operator in $\BP_0(\HH)$, let 
$\gamma\in\left]0,1/\|U\|\right[$, let $u\in\HH$, 
set $W=\Id-\gamma U$, and set $B=A+U+\{u\}$. Then 
\begin{equation}
\label{e:id2}
(\forall x\in\HH)\quad
J_{\gamma B}^{W}x=J_{\gamma A}\big(Wx-\gamma u\big).
\end{equation}
\end{lemma}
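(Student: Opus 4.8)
The plan is to collapse the variable-metric resolvent of $B$ onto the ordinary resolvent of $A$ by exploiting the single algebraic identity built into the definition of $W$, namely $W+\gamma U=\Id$. Before computing, I would record the two well-posedness facts that make the statement meaningful. Since $U\in\BP_0(\HH)$ satisfies $0\preccurlyeq U\preccurlyeq\|U\|\Id$ and $\gamma\|U\|<1$, we have $W=\Id-\gamma U\succcurlyeq(1-\gamma\|U\|)\Id$, so $W\in\BP_{\alpha}(\HH)$ with $\alpha=1-\gamma\|U\|>0$; in particular $W$ is an admissible metric and its induced resolvent is defined. Moreover $U+\{u\}$ is a continuous, everywhere-defined monotone operator, so by the standard sum theorem $B=A+U+\{u\}$ is maximally monotone, whence $J_{\gamma B}^{W}=(W+\gamma B)^{-1}\circ W$ is well-defined via \eqref{e:md24}.

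The heart of the argument is a one-line computation of the combined operator $W+\gamma B$. Using $B=A+U+\{u\}$ and the defining relation $W+\gamma U=\Id$, I would expand
\[
W+\gamma B=W+\gamma A+\gamma U+\{\gamma u\}=(W+\gamma U)+\gamma A+\{\gamma u\}=\Id+\gamma A+\{\gamma u\}.
\]
Thus the cross term $\gamma U$ is exactly absorbed by $W$, the variable metric disappears from the sum, and what remains is a translate of the operator $\Id+\gamma A$ whose inverse is the standard resolvent $J_{\gamma A}$.

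It then remains to invert and evaluate at $Wx$. Fixing $x\in\HH$ and setting $p=J_{\gamma B}^{W}x$, the definition of the resolvent gives $Wx\in(W+\gamma B)p=(\Id+\gamma A)p+\gamma u$, hence $Wx-\gamma u\in(\Id+\gamma A)p$, that is, $p=(\Id+\gamma A)^{-1}(Wx-\gamma u)=J_{\gamma A}(Wx-\gamma u)$, which is precisely \eqref{e:id2}.

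I do not anticipate a genuine obstacle here: the entire content is the cancellation $W+\gamma U=\Id$. The only points that require a little care are bookkeeping ones, namely treating $U+\{u\}$ as a (single-valued, hence singleton-valued) perturbation so that the set-valued sum and its inverse behave as in the single-valued case, and confirming the maximal monotonicity of $B$ so that the notation $J_{\gamma B}^{W}$ is legitimate in the first place.
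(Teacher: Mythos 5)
Your proof is correct and follows essentially the same route as the paper: both establish maximal monotonicity of $B$ and $W\in\BP_{\alpha}(\HH)$ with $\alpha=1-\gamma\|U\|$, and then reduce $J_{\gamma B}^{W}$ to $J_{\gamma A}$ via the cancellation $W+\gamma U=\Id$ in the inclusion $Wx\in Wp+\gamma Bp$. Your explicit display of the operator identity $W+\gamma B=\Id+\gamma A+\{\gamma u\}$ is just a slightly more verbose packaging of the paper's chain of equivalences.
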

\begin{proof}
Since $U\in\BP_0(\HH)$, $U$ is maximally monotone 
\cite[Example~20.29]{Livre1}. In turn, it follows from 
\cite[Corollary~24.4(i)]{Livre1} that $B$ is maximally monotone. 
Moreover, $W\in\BP_{\alpha}(\HH)$, where $\alpha=1-\gamma\|U\|$. 
Now, let $x$ and $p$ be in $\HH$. 
Then it follows from \eqref{e:md24} that
\begin{equation}
p=J_{\gamma B}^{W}x
\Leftrightarrow Wx\in Wp+\gamma Bp
\Leftrightarrow Wx-\gamma u\in p+\gamma Ap
\Leftrightarrow p=J_{\gamma A}\big(Wx-\gamma u\big),
\end{equation}
which completes the proof.
\end{proof}

\begin{proposition}
\label{p:2012-08-16}
Let $\varepsilon\in\left]0,1/(1+\sum_{i\in I}\mu_i\|L_i\|^2)
\right[$, let $(a_n)_{n\in\NN}$ be a sequence 
in $\HH$ such that $\sum_{n\in\NN}\|a_n\|<\pinf$, let 
$(\eta_n)_{n\in\NN}$ be a sequence in $\ell_{+}^1(\NN)$, and let 
$(\gamma_n)_{n\in\NN}$ be a sequence in $\RR$ such that
\begin{equation}
\label{e:2012-08-24a}
(\forall n\in\NN)\quad 
\varepsilon\leq\gamma_n\leq
\frac{1-\varepsilon}{\displaystyle\sum_{i\in I}\mu_i\|L_i\|^2}
\quad\text{and}\quad
(1+\eta_n)\gamma_n-\gamma_{n+1}\leq\frac{\eta_n}
{\displaystyle\sum_{i\in I}\mu_i\|L_i\|^2}.
\end{equation}
Furthermore, let $C$ be the set of solutions to 
Problem~\ref{prob:1}, let $x_0\in\HH$, let $(\lambda_n)_{n\in\NN}$
be a sequence in $[\varepsilon,2-\varepsilon]$, and set
\begin{equation}
\label{e:2012-08-16md1}
(\forall n\in\NN)\quad
x_{n+1}=x_n+\lambda_n\bigg(\prox_{\gamma_n f}
\bigg(x_n+\gamma_n\sum_{i\in I}\mu_iL_{i}^*
\big(r_i-L_{i}x_n\big)\bigg)+a_n-x_n\bigg).
\end{equation}
Then the following hold for some $\overline{x}\in C$.
\begin{enumerate}
\item
\label{p:2012-08-16i}
Suppose that 
\begin{equation}
\label{e:athens2012-08-22a}
\lim_{\|x\|\to\pinf}\;\;f(x)+\frac12\sum_{i\in I}
\mu_i\|L_ix-r_i\|_i^2=\pinf.
\end{equation}
Then $x_n\weakly\overline{x}$.
\item
\label{p:2012-08-16ii}
Suppose that there exists $j\in I$ such that $L_j$ is bounded 
below, say,
\begin{equation}
\label{e:2012-08-16a}
(\exi\beta\in\RPP)(\forall x\in\HH)\quad\|L_jx\|_j\geq\beta\|x\|.
\end{equation}
Then $C=\{\overline{x}\}$ and $x_n\to\overline{x}$.
\end{enumerate}
\end{proposition}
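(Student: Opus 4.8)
The plan is to recast the recursion \eqref{e:2012-08-16md1} as the variable metric proximal point iteration \eqref{e:md08} for a suitable maximally monotone operator and then to invoke Corollary~\ref{c:3}. First I would set $U=\sum_{i\in I}\mu_iL_i^*L_i$ and $u=\sum_{i\in I}\mu_iL_i^*r_i$. Since the operators $L_i$ are nonzero, $U$ is a nonzero operator in $\BP_0(\HH)$ and $\|U\|\leq\sum_{i\in I}\mu_i\|L_i\|^2$. The objective in \eqref{e:2012-08-16b} is $\Phi=f+g$, where $g\colon x\mapsto\frac12\sum_{i\in I}\mu_i\|L_ix-r_i\|_i^2$ is a real-valued convex differentiable function with $\nabla g\colon x\mapsto Ux-u$; thus $\Phi\in\Gamma_0(\HH)$ and, by Fermat's rule and the sum rule for subdifferentials, $B=\partial\Phi=\partial f+U-\{u\}$ is maximally monotone with $\zer B=C$. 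For every $n\in\NN$ I would put $W_n=\Id-\gamma_nU$; the left inequality in \eqref{e:2012-08-24a} gives $\gamma_n\|U\|\leq 1-\varepsilon$, so that $W_n\in\BP_{\varepsilon}(\HH)$, $\sup_{n\in\NN}\|W_n\|\leq 1$, and $\gamma_n\in\left]0,1/\|U\|\right[$.

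The first key step is to apply Lemma~\ref{l:2012-08-16} with $A=\partial f$, with the operator $U$ above, and with $-u$ in place of its $u$. Since $J_{\gamma_n\partial f}=\prox_{\gamma_nf}$ by \eqref{e:prox3}, and since $W_nx_n+\gamma_nu=x_n-\gamma_n(Ux_n-u)$, this yields
\[
J_{\gamma_nB}^{W_n}x_n
=\prox_{\gamma_nf}\big(W_nx_n+\gamma_nu\big)
=\prox_{\gamma_nf}\bigg(x_n+\gamma_n\sum_{i\in I}\mu_iL_i^*(r_i-L_ix_n)\bigg).
\]
Hence \eqref{e:2012-08-16md1} is precisely \eqref{e:md08} for the operator $B$, and it remains only to check the hypotheses of Corollary~\ref{c:3}.

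The main obstacle is the Loewner monotonicity requirement $(1+\eta_n)W_n\succcurlyeq W_{n+1}$. Here I would compute $(1+\eta_n)W_n-W_{n+1}=\eta_n\Id-\big((1+\eta_n)\gamma_n-\gamma_{n+1}\big)U$ and use $0\preccurlyeq U\preccurlyeq\big(\sum_{i\in I}\mu_i\|L_i\|^2\big)\Id$. If $(1+\eta_n)\gamma_n-\gamma_{n+1}\leq 0$ the subtracted term is $\preccurlyeq 0\preccurlyeq\eta_n\Id$; otherwise the right inequality in \eqref{e:2012-08-24a} bounds it by $\eta_n\Id$. In either case $(1+\eta_n)W_n\succcurlyeq W_{n+1}$, and this is exactly where the second constraint in \eqref{e:2012-08-24a} is consumed.

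Finally I would supply nonemptiness of $C$ and the relevant convergence mechanism. For \ref{p:2012-08-16i}, the coercivity \eqref{e:athens2012-08-22a} forces the minimum of $\Phi\in\Gamma_0(\HH)$ to be attained, so $C\neq\emp$, and Corollary~\ref{c:3}\ref{c:3i} gives $x_n\weakly\overline{x}\in C$. For \ref{p:2012-08-16ii}, the bound \eqref{e:2012-08-16a} yields $\scal{Ux}{x}\geq\mu_j\|L_jx\|_j^2\geq\mu_j\beta^2\|x\|^2$, hence $U\succcurlyeq\mu_j\beta^2\Id$; thus $g$, and therefore $\Phi$, is strongly convex with modulus $\sigma=\mu_j\beta^2$, so $\Phi$ has a unique minimizer $\overline{x}$ (whence $C=\{\overline{x}\}$) and $B=\partial\Phi$ is $\sigma$-strongly monotone. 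Consequently $B$ satisfies \eqref{e:Bunifmon} with $\phi\colon t\mapsto\sigma t^2$, and Corollary~\ref{c:3}\ref{c:3iii} delivers $x_n\to\overline{x}$. I expect the Loewner verification to be the only genuinely delicate point, the rest following routinely once the proximal point reformulation is established.
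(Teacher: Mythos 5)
Your proposal is correct and follows essentially the same route as the paper: rewrite the iteration as the variable metric proximal point algorithm \eqref{e:md08} for $B=\partial f+U-\{u\}$ with $W_n=\Id-\gamma_nU$ via Lemma~\ref{l:2012-08-16} and \eqref{e:prox3}, verify $(1+\eta_n)W_n\succcurlyeq W_{n+1}$ by the same case analysis on the sign of $(1+\eta_n)\gamma_n-\gamma_{n+1}$, and then invoke Corollary~\ref{c:3}\ref{c:3i} (with coercivity supplying $C\neq\emp$) and Corollary~\ref{c:3}\ref{c:3iii} (with $U\succcurlyeq\mu_j\beta^2\Id$ giving strong, hence uniform, monotonicity). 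The only deviations are cosmetic: an opposite sign convention for $u$, correctly compensated when applying the lemma, and the sharper bound $\sup_{n\in\NN}\|W_n\|\leq 1$ in place of the paper's $2-\varepsilon$.
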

\begin{proof}
Set $U=\sum_{i\in I}\mu_iL_i^*L_i$ and
$u=-\sum_{i\in I}\mu_iL_i^*r_i$.
Then 
\begin{equation}
\label{e:2012-08-16md8}
\|U\|\leq\sum_{i\in I}\mu_i\|L_i\|^2,
\end{equation}
and the assumptions imply that $0\neq U\in\BP_{0}(\HH)$ and that 
$(\forall n\in\NN)$ 
$\varepsilon\leq\gamma_n\leq(1-\varepsilon)/\|U\|$. Now set 
\begin{equation}
\label{e:2012-08-17md1}
g\colon\HH\to\RX\colon x\mapsto
f(x)+\frac12\scal{Ux}{x}+\scal{x}{u}
\end{equation}
and 
\begin{equation}
\label{e:2012-08-17md3}
(\forall n\in\NN)\quad W_n=\Id-\gamma_n U.
\end{equation}
Then \eqref{e:2012-08-16b} is equivalent to minimizing $g$.
Furthermore, it follows from \eqref{e:2012-08-24a} that
$(W_n)_{n\in\NN}$ lies in $\BP_{\varepsilon}(\HH)$ and
that $\sup_{n\in\NN}\|W_n\|\leq 2-\varepsilon$. In addition,
we have
\begin{equation}
\label{e:2012-08-27a}
(\forall n\in\NN)\quad\eta_n\geq
\big((1+\eta_n)\gamma_n-\gamma_{n+1}\big)\|U\|.
\end{equation}
Indeed if, for some $n\in\NN$, $(1+\eta_n)\gamma_n\leq\gamma_{n+1}$
then $\eta_n\geq 0\geq((1+\eta_n)\gamma_n-\gamma_{n+1})\|U\|$;
otherwise we deduce from \eqref{e:2012-08-24a} and
\eqref{e:2012-08-16md8} that 
$\eta_n\geq((1+\eta_n)\gamma_n-\gamma_{n+1})
\sum_{i\in I}\mu_i\|L_i\|^2\geq
((1+\eta_n)\gamma_n-\gamma_{n+1})\|U\|$. Thus, since 
$U\in\BP_0(\HH)$, we have 
$\|U\|=\sup_{\|x\|\leq 1}\scal{Ux}{x}$ and therefore
\begin{eqnarray}
\text{\eqref{e:2012-08-27a}}
&\Rightarrow&
(\forall n\in\NN)(\forall x\in\HH)\quad\eta_n\|x\|^2
\geq\big((1+\eta_n)\gamma_n-\gamma_{n+1}\big)\scal{Ux}{x}
\nonumber\\
&\Rightarrow&
(\forall n\in\NN)(\forall x\in\HH)\quad
(1+\eta_n)(\|x\|^2-\gamma_n\scal{Ux}{x})\geq\|x\|^2-
\gamma_{n+1}\scal{Ux}{x}\nonumber\\
&\Rightarrow&
(\forall n\in\NN)\quad(1+\eta_n)W_n\succcurlyeq W_{n+1}.
\end{eqnarray}
Now set $A=\partial f$ and $B=A+U+\{u\}$. Then
we derive from \cite[Corollary~16.38(iii)]{Livre1} that 
$B=\partial g$. Hence, using \eqref{e:prox3}, 
\eqref{e:2012-08-17md3}, and Lemma~\ref{l:2012-08-16}, 
\eqref{e:2012-08-16md1} can be rewritten as 
\begin{align}
\label{e:2012-08-17md2}
(\forall n\in\NN)\quad 
x_{n+1}
&=x_n+\lambda_n\Big(\prox_{\gamma_n f}
\big(x_n-\gamma_n(Ux_n+u)\big)+a_n-x_n\Big)\nonumber\\
&=x_n+\lambda_n\Big(J_{\gamma_n A}
\big(W_nx_n-\gamma_nu\big)+a_n-x_n\Big)\nonumber\\
&=x_n+\lambda_n\Big(J^{W_n}_{\gamma_n B}x_n
+a_n-x_n\Big).
\end{align}
On the other hand, it follows from 
Fermat's rule \cite[Theorem~16.2]{Livre1} that 
\begin{equation}
\menge{z\in\HH}{0\in Bz}=\operatorname{Argmin} g=C.
\end{equation}

\ref{p:2012-08-16i}: 
Since $f\in\Gamma_0(\HH)$ and $U\in\BP_{0}(\HH)$,
it follows from \cite[Proposition~11.14(i)]{Livre1} that 
Problem~\ref{prob:1} admits at least one solution. Altogether, 
the result follows from Corollary~\ref{c:3}\ref{c:3i}.

\ref{p:2012-08-16ii}:
It follows from \eqref{e:2012-08-16a} that 
$L_j^*L_j\in\BP_{\beta^2}(\HH)$. 
Therefore, $U\in\BP_{\mu_j\beta^2}(\HH)$ and, 
since $f\in\Gamma_0(\HH)$, we derive from 
\eqref{e:2012-08-17md1} that $g\in\Gamma_0(\HH)$ 
is strongly convex. Hence, \cite[Corollary~11.16]{Livre1} 
asserts that \eqref{e:2012-08-16b} possesses a unique solution,
while \cite[Example~22.3(iv)]{Livre1} asserts that $B$ is
strongly -- hence uniformly -- monotone. Altogether, 
the claim follows from Corollary~\ref{c:3}\ref{c:3iii}.
\end{proof}

\begin{remark}
\label{r:2012-08-27}
In Problem~\ref{prob:1} suppose that $I=\{1\}$, $\mu_1=1$, $L_1=L$,
and $r_1=r$, and that $\lim_{\|x\|\to\pinf}$ $f(x)+
\|Lx-r\|_1^2/2=\pinf$. Then \eqref{e:2012-08-16md1} reduces to 
the proximal Landweber method
\begin{equation}
\label{e:2012-08-27md1}
(\forall n\in\NN)\quad 
x_{n+1}=x_n+\lambda_n\Big(\prox_{\gamma_n f}
\big(x_n+\gamma_nL^*(r-Lx_n)\big)+a_n-x_n\Big),
\end{equation}
and we derive from 
Proposition~\ref{p:2012-08-16}\ref{p:2012-08-16i} that 
$(x_n)_{n\in\NN}$ converges weakly to a minimizer of
$x\mapsto f(x)+\|Lx-r\|_1^2/2$ if 
\begin{equation}
\label{e:2012-08-27md2}
(\forall n\in\NN)\quad 
\begin{cases}
\varepsilon\leq\gamma_n\leq(1-\varepsilon)/\|L\|^2\\
(1+\eta_n)\gamma_n\leq\gamma_{n+1}+\eta_n/\|L\|^2\\
\varepsilon\leq\lambda_n\leq 2-\varepsilon.
\end{cases}
\end{equation}
This result complements \cite[Theorem~5.5(i)]{Smms05}, which
establishes weak convergence under alternative conditions on the
parameters $(\gamma_n)_{n\in\NN}$ and $(\lambda_n)_{n\in\NN}$,
namely 
\begin{equation}
\label{e:2012-08-27md3}
(\forall n\in\NN)\quad 
\begin{cases}
\varepsilon\leq\gamma_n\leq(2-\varepsilon)/\|L\|^2\\
\varepsilon\leq\lambda_n\leq 1.
\end{cases}
\end{equation}
In particular, suppose that $\HH$ is separable, let
$(e_k)_{k\in\NN}$ be an orthonormal basis of $\HH$,
and set $f\colon x\mapsto\sum_{k\in\NN}\phi_k(\scal{x}{e_k})$, 
where $(\forall k\in\NN)$ $\Gamma_0(\RR)\ni\phi_k\geq\phi_k(0)=0$.
Moreover, for every $n\in\NN$, let $(\alpha_{n,k})_{k\in\NN}$ be 
a sequence in $\ell^2(\NN)$ and suppose that
$\sum_{n\in\NN}\sqrt{\sum_{k\in\NN}|\alpha_{n,k}|^2}<\pinf$.
Now set $(\forall n\in\NN)$ $a_n=\sum_{k\in\NN}\alpha_{n,k}e_k$.
Then, arguing as in \cite[Section~5.4]{Smms05}, 
\eqref{e:2012-08-27md1} becomes 
\begin{equation}
\label{e:2012-08-27md4}
(\forall n\in\NN)\quad 
x_{n+1}=x_n+\lambda_n\bigg(\sum_{k\in\NN}
\big(\alpha_{n,k}+\prox_{\gamma_n\phi_k}
\scal{x_n+\gamma_nL^*(r-Lx_n)}{e_k}\big)e_k-x_n\bigg),
\end{equation}
and we obtain convergence under the new condition 
\eqref{e:2012-08-27md2} (see also \cite{Siop07} for potential
signal and image processing applications of this result).
\end{remark}

\end{document}